\newtheorem{theorem}{Theorem}[section]
\newtheorem{proposition}[theorem]{Proposition}
\newtheorem{lemma}[theorem]{Lemma}
\newtheorem{corollary}[theorem]{Corollary}
\theoremstyle{definition}
\newtheorem{example}[theorem]{Example}
\newtheorem{question}[theorem]{Question}
\numberwithin{equation}{section}
\begin{document}

\title{Zero Jordan product determined Banach algebras}

\author{J. Alaminos}
\address{Departamento de An\' alisis
    Matem\' atico\\ Fa\-cul\-tad de Ciencias\\ Universidad de Granada\\
    18071 Granada, Spain}
\email{alaminos@ugr.es}
\author{M. Bre\v sar}
\address{Faculty of Mathematics and Physics\\
    University of Ljubljana\\
    Jadranska 19, 1000 Ljubljana,
    and \\
    Faculty of Natural Sciences and Mathematics\\
    University of Maribor\\
    Koro\v ska 160, 2000 Maribor, Slovenia} \email{matej.bresar@fmf.uni-lj.si}
\author{J. Extremera}
\address{Departamento de An\' alisis
    Matem\' atico\\ Fa\-cul\-tad de Ciencias\\ Universidad de Granada\\
    18071 Granada, Spain}
\email{jlizana@ugr.es}
\author{A.\,R. Villena}
\address{Departamento de An\'alisis
    Matem\' atico\\ Fa\-cul\-tad de Ciencias\\ Universidad de Granada\\
    18071 Granada, Spain}
\email{avillena@ugr.es}

\begin{abstract}
A  Banach algebra $A$ is said to be a zero Jordan product determined
Banach algebra if every continuous bilinear map $\varphi\colon A\times A\to X$,
where $X$ is an arbitrary Banach space, which satisfies $\varphi(a,b)=0$ whenever
$a$, $b\in A$ are such that $ab+ba=0$, is of the form $\varphi(a,b)=\sigma(ab+ba)$
for some continuous linear map $\sigma$. We show that all $C^*$-algebras and all
group algebras $L^1(G)$ of amenable locally compact groups have this
property, and also discuss some applications.
\end{abstract}

\subjclass[2010]{%
43A20,  46H05, 46L05.
}
\keywords{$C^*$-algebra, group algebra, zero Jordan product determined Banach algebra,
zero product determined Banach algebra, symmetrically amenable Banach algebra, weakly amenable Banach algebra}
\thanks{
The authors were supported by MINECO grant MTM2015--65020--P.
The first, the third  and the fourth named authors were supported
by  Junta de Andaluc\'{\i}a grant FQM--185.
The second  named author was supported by ARRS grant P1--0288.}

\maketitle

\section{Introduction}

The purpose of this paper is to show that some fundamental results on zero product determined Banach algebras and 
zero Lie product determined Banach algebras also hold for zero Jordan product determined Banach algebras.
In the next paragraphs, we give definitions of these and  related notions, and recall the relevant results.

Let $A$ be a Banach algebra. For $a,b\in A$, we write $a\circ b=ab+ba$
and $[a,b]=ab-ba$.
We denote by $A^2$, $A\circ A$, and $[A,A]$ the linear span of all elements of the form
$ab$, $a\circ b$, and $[a,b]$ ($a,b\in A$), respectively.
We say that $A$ is a \emph{zero product determined Banach algebra} if every continuous bilinear map
$\varphi\colon A\times A\to X$, where $X$ is an arbitrary Banach space, with the property that
\begin{equation}\label{P}
a,b\in A, \ ab=0 \ \Longrightarrow \ \varphi(a,b)=0
\end{equation}
can be written in the standard form
\begin{equation}\label{PS}
\varphi(a,b)=\sigma(a b) \quad (a,b\in A)
\end{equation}
for some continuous linear map $\sigma\colon A^2\to X$.
This concept appeared as a byproduct of the so-called property $\mathbb{B}$ introduced in \cite{ABEV0}.
We say that $A$ has \emph{property $\mathbb{B}$} if for every continuous bilinear map
$\varphi \colon A\times A\to X$, where $X$ is an arbitrary Banach space, the
condition~\eqref{P} implies the condition
\begin{equation}\label{B11}
\varphi(ab,c)=\varphi(a,bc)  \quad (a,b,c\in A).
\end{equation}
In~\cite{ABEV0} it was shown that many important examples of Banach algebras,
including $C^*$-algebras and group algebras $L^1(G)$, where $G$ is any locally compact group, have property $\mathbb{B}$.
Suppose that the Banach algebra $A$ has the property $\mathbb{B}$ and has a bounded approximate identity,
i.e., a bounded net $(e_\lambda)_{\lambda\in\Lambda}$ such that
$\lim_{\lambda\in\Lambda}e_\lambda a=\lim_{\lambda\in\Lambda}a e_\lambda=a$ for each $a\in A$.
If $X$ is a Banach space and $\varphi\colon A\times A\to X$ is a continuous bilinear map that
satisfies~\eqref{P}, then~\eqref{B11} holds. Setting $c=e_\lambda$ ($\lambda\in\Lambda$) we
see that the net $(\varphi(a,e_\lambda))_{\lambda\in\Lambda}$ converges for each $a\in A^2$,
and defining $\sigma(a)=\lim_{\lambda\in\Lambda}\varphi(a,e_\lambda)$ we obtain a continuous
linear map that satisfies~\eqref{PS}
(it is worth pointing out that, in this case, $A^2=A$ because of the factorization theorem).
Consequently, $A$ is a zero product determined Banach algebra.
This remark applies to $C^*$-algebras and group algebras, and, therefore, all of them are zero product determined Banach algebras.

We can define Lie and Jordan versions of the zero product determination in a natural way.
We say that $A$ is a \emph{zero Lie product determined Banach  algebra} if every continuous bilinear map
$\varphi\colon A\times A\to X$, where $X$ is an arbitrary Banach space, with the property that
\begin{equation*}%\label{L}
a,b\in A, \ [a,b]=0 \ \Longrightarrow \ \varphi(a,b)=0
\end{equation*}
can be written in the standard form
\begin{equation*}%\label{LS}
\varphi(a,b)=\sigma([a, b]) \quad (a,b\in A)
\end{equation*}
for some continuous linear map $\sigma\colon [A,A]\to X$.
This notion has been introduced in our recent papers \cite{ABEVLie0,ABEVLie},
where we have shown that
every weakly amenable Banach algebra with property $\mathbb{B}$ and having a
bounded approximate identity is a zero Lie product determined Banach algebra.
Consequently, all $C^*$-algebras and all group algebras are zero Lie product determined Banach algebras.

Finally,
we say that $A$ is a \emph{zero Jordan product determined Banach algebra} if every continuous bilinear map
$\varphi\colon A\times A\to X$, where $X$ is an arbitrary Banach space, with the property that
\begin{equation}\label{J}
a,b\in A, \ a\circ b=0 \ \Longrightarrow \ \varphi(a,b)=0
\end{equation}
can be written in the standard form
\begin{equation}\label{S}
\varphi(a,b)=\sigma(a\circ b) \quad (a,b\in A)
\end{equation}
for some continuous linear map $\sigma\colon A\circ A\to X$.
The main goal of this paper is to show that every $C^*$-algebra, as well as every group algebra $L^1(G)$ of an amenable locally compact group $G$,
is a zero Jordan product determined Banach algebra. Applications are also discussed, in particular those about the representation of commutators. Some remarks concerning the purely algebraic theory of zero Jordan product determined  algebras are also given at the end.

Let $X$ be a Banach space. We write $X^*$ for the dual of $X$. If  $S\subset X$, then
$\operatorname{span}\,S$ and  $\overline{\operatorname{span}}\,S$ 
stand for the linear span and the closed linear span of  $S$, respectively.

\section{Alternative definition}

We next show that the role of the Banach space $X$ in the definition of a 
zero Jordan product determined Banach algebra is ancillary, and can be replaced by $\mathbb{C}$.

\begin{proposition}\label{1046}
Let $A$ be a Banach algebra. Then the following properties are equivalent:
\begin{enumerate}
\item 
$A$ is a zero Jordan product determined Banach algebra,
\item
every continuous bilinear functional $\varphi\colon A\times A\to\mathbb{C}$ that satisfies~\eqref{J}
is of the form~\eqref{S} for some continuous linear functional $\sigma\in A^*$.
\end{enumerate}
\end{proposition}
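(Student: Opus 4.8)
The plan is to establish the nontrivial implication, namely $(2)\Rightarrow(1)$; the reverse implication $(1)\Rightarrow(2)$ holds trivially since $\mathbb{C}$ is itself a Banach space. So assume $(2)$, let $X$ be an arbitrary Banach space, and let $\varphi\colon A\times A\to X$ be a continuous bilinear map satisfying~\eqref{J}. The idea is to build the desired map $\sigma$ by testing $\varphi$ against functionals on $X$. For each $x^*\in X^*$, the composition $x^*\circ\varphi\colon A\times A\to\mathbb{C}$ is a continuous bilinear functional, and it inherits property~\eqref{J} from $\varphi$: if $a\circ b=0$ then $\varphi(a,b)=0$ and hence $x^*(\varphi(a,b))=0$. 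By hypothesis $(2)$, there exists $\sigma_{x^*}\in A^*$ with $x^*(\varphi(a,b))=\sigma_{x^*}(a\circ b)$ for all $a,b\in A$.

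Next I would define $\sigma$ on $A\circ A$ by $\sigma\bigl(\sum_i a_i\circ b_i\bigr)=\sum_i\varphi(a_i,b_i)$. The crucial point is that this is well defined: if $\sum_i a_i\circ b_i=0$, then for every $x^*\in X^*$ we have $x^*\bigl(\sum_i\varphi(a_i,b_i)\bigr)=\sum_i\sigma_{x^*}(a_i\circ b_i)=\sigma_{x^*}\bigl(\sum_i a_i\circ b_i\bigr)=0$, so $\sum_i\varphi(a_i,b_i)=0$ because $X^*$ separates the points of $X$. Thus $\sigma\colon A\circ A\to X$ is a well-defined linear map satisfying~\eqref{S}, and it only remains to prove that it is continuous.

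The main obstacle is precisely this continuity, since $A\circ A$ need not be closed in $A$, so one cannot simply appeal to completeness of the domain. I would handle it with the closed graph theorem applied to the map $\Phi\colon X^*\to(A\circ A)^*$ given by $\Phi(x^*)=x^*\circ\sigma$; note that $(A\circ A)^*$ is a Banach space even though $A\circ A$ is merely normed, and each $x^*\circ\sigma$ is continuous because it agrees with the restriction of $\sigma_{x^*}$ to $A\circ A$. To check that the graph is closed, suppose $x^*_n\to x^*$ in $X^*$ and $\Phi(x^*_n)\to\psi$ in $(A\circ A)^*$; evaluating at a generator $a\circ b$ gives $\Phi(x^*_n)(a\circ b)=x^*_n(\varphi(a,b))\to x^*(\varphi(a,b))=\Phi(x^*)(a\circ b)$, while also $\Phi(x^*_n)(a\circ b)\to\psi(a\circ b)$, so $\psi$ and $\Phi(x^*)$ agree on the spanning set $\{a\circ b\}$ and hence on all of $A\circ A$. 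Therefore $\Phi$ is bounded, say $\|x^*\circ\sigma\|\le C\|x^*\|$, and for $u\in A\circ A$ one concludes $\|\sigma(u)\|=\sup_{\|x^*\|\le1}|x^*(\sigma(u))|=\sup_{\|x^*\|\le1}|(x^*\circ\sigma)(u)|\le C\|u\|$, which gives the continuity of $\sigma$ and completes the argument.
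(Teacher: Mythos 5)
Your proof is correct and takes essentially the same approach as the paper: your map $\Phi(x^*)=x^*\circ\sigma$ is precisely the paper's map $\tau\colon X^*\to(A\circ A)^*$, whose continuity is likewise established there by the closed graph theorem, and the well-definedness and continuity of $\sigma$ are obtained exactly as you obtain them (separation by $X^*$, then norm attainment via Hahn--Banach). The only step worth stating explicitly in the direction $(1)\Rightarrow(2)$ is that applying (1) with $X=\mathbb{C}$ produces a functional defined only on $A\circ A$, which must then be extended to an element of $A^*$ by Hahn--Banach; this is immediate, and is the sense in which that direction is ``trivial''.
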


\begin{proof}
Suppose that (1) holds.
Let $\varphi\colon A\times A\to\mathbb{C}$ be a continuous bilinear functional
satisfying~\eqref{J}. By applying property (2) with $X=\mathbb{C}$, we get  $\tau\in (A\circ A)^*$ such that
$\varphi(a,b)=\tau(a\circ b)$ $(a,b\in A)$. The functional
$\tau$ can be extended to a continuous linear functional on $A$ so that (1) is obtained.

We now assume that (2) holds.
Let $X$ be a Banach space, and let $\varphi\colon A\times A\to X$ be a continuous bilinear map
that satisfies \eqref{J}.
For each $\xi\in X^*$, the continuous bilinear functional $\xi\circ\varphi\colon A\times A\to\mathbb{C}$
satisfies~\eqref{J}. Therefore there exists a unique $\tau(\xi)\in (A\circ A)^{*}$ such that
$\xi(\varphi(a,b))=\tau(\xi)(a\circ b)$ for all $a,b\in A$.
It is clear that the map $\tau\colon X^*\to (A\circ A)^*$
is linear. We next show that $\tau$ is continuous. Let $(\xi_n)$ be a sequence in $X^*$ with $\lim \xi_n=0$
and $\lim \tau(\xi_n)=\xi$ for some $\xi\in (A\circ A)^*$. For each $a,b\in A$, we have
\[
0=\lim\xi_n(\varphi(a,b))=\lim\tau(\xi_n)(a\circ b)=\xi(a\circ b).
\]
We thus have $\xi=0$, and the closed graph theorem yields the continuity of $\tau$.

For all $a_1,\ldots,a_n,b_1,\ldots,b_n\in A$ and $\xi\in X^*$ we have
\begin{equation}\label{11667}
\begin{split}\xi\Bigl(\sum_{k=1}^n\varphi(a_k,b_k)\Bigr) & =
\sum_{k=1}^n\xi\bigl(\varphi(a_k,b_k)\bigr) =\sum_{k=1}^n\tau(\xi)(a_k\circ b_k)
\\
& = \tau(\xi)\Bigl(\sum_{k=1}^n a_k\circ b_k\Bigr).
\end{split}
\end{equation}
Consequently, if $a_1,\ldots,a_n,b_1,\ldots,b_n\in A$ are such that $\sum_{k=1}^n a_k\circ b_k=0$,
then $\xi\bigl(\sum_{k=1}^n\varphi(a_k,b_k)\bigr)=0$ for each $\xi\in X^*$, and hence
$\sum_{k=1}^n\varphi(a_k,b_k)=0$. We thus can define a linear map $\sigma\colon A\circ A\to X$ by
\[
\sigma\Bigl(\sum_{k=1}^n a_k\circ b_k\Bigr)=\sum_{k=1}^n\varphi(a_k,b_k)
\]
for all $a_1,\ldots,a_n,b_1,\ldots,b_n\in A$. Of  course, $\varphi(a,b)=\sigma(a\circ b)$ for all $a,b\in A$.
Our next concern is the continuity of $\sigma$.
Let  $a_1,\ldots,a_n,b_1,\ldots,b_n\in A$. Then there exists $\xi\in X^*$ with $\Vert  \xi \Vert =1$ such that
\[
\xi\Bigl(\sum_{k=1}^n\varphi(a_k,b_k)\Bigr)=\Bigl\Vert\sum_{k=1}^n\varphi(a_k,b_k)\Bigr\Vert.
\]
On account of~\eqref{11667}, we have
\begin{align*}
\Bigl\Vert \sigma\Bigl(\sum_{k=1}^n a_k\circ b_k\Bigr)\Bigr\Vert & =
\Bigl\Vert\sum_{k=1}^n\varphi(a_k,b_k)\Bigr\Vert  = \xi\Bigl(\sum_{k=1}^n\varphi(a_k,b_k)\Bigr) \\
& = \Bigl\vert\tau(\xi)\Bigl(\sum_{k=1}^n a_k\circ b_k\Bigr)\Bigr\vert
 \le \Vert\tau(\xi)\Vert\Bigl\Vert\sum_{k=1}^n a_k\circ b_k\Bigr\Vert \\
& \le \Vert\tau\Vert\Bigl\Vert\sum_{k=1}^n a_k\circ b_k\Bigr\Vert,
\end{align*}
which proves the continuity of $\sigma$, and hence that property (1) holds.
\end{proof}

\section{Amenability-like properties}

Recall that a Banach algebra $A$ is said to be \emph{amenable} if
every continuous derivation from $A$ into $X^*$ is inner,
whenever $X$ is a Banach $A$-bimodule. It is known (see \cite[Theorem 2.9.65]{D})
that the Banach algebra $A$ is amenable if and only if it has an
{\em approximate diagonal}, that is, a bounded net
$(\mathbf{t}_\lambda)_{\lambda\in\Lambda}$ in the projective tensor product
$A\widehat{\otimes}A$ with the properties
\begin{equation}\label{E1}
\lim_{\lambda\in\Lambda}\, \left(a\cdot\mathbf{t}_\lambda-\mathbf{t}_\lambda\cdot a\right)=0
\end{equation}
and
\begin{equation}\label{E2}
\lim_{\lambda\in\Lambda}\pi(\mathbf{t}_\lambda)a=a
\end{equation}
for each $a\in A$, where the operations on $A\widehat{\otimes}A$
are defined, for simple tensors, by
\[
a\cdot(b\otimes c)=ab\otimes c, \
(b\otimes c)\cdot a=b\otimes ca, \ \text{and} \
\pi(b\otimes c)=bc
\]
for all $a,b,c\in A$. The flip map on $A\widehat{\otimes}A$ is defined by
\[
(a\otimes b)^\circ=b\otimes a\qquad(a,b\in A),
\]
and a tensor $\mathbf{t}$ of $A\widehat{\otimes}A$ is called \emph{symmetric} if
$\mathbf{t}^\circ=\mathbf{t}$.
A Banach algebra $A$ is said to be \emph{symmetrically amenable} if it has an
approximate diagonal consisting of symmetric tensors.
This concept was introduced by B. E. Johnson in \cite{J2}, where properties and
examples of symmetrically amenable Banach algebras can be found.
Most, but not all, amenable Banach algebras are symmetrically amenable.
The group algebra $L^1(G)$ is symmetrically amenable for each locally compact amenable  group $G$ (\cite[Theorem 4.1]{J2}).

A Banach algebra $A$ is said to be \emph{weakly amenable}
if every continuous derivation from $A$ into $A^*$ is inner.
For a thorough treatment of this property and an account of many interesting examples of
weakly amenable Banach algebras we refer the reader to~\cite{D}.
We should remark that each $C^*$-algebra and the group algebra $L^1(G)$ of each locally compact group
$G$ are weakly amenable~\cite[Theorems 5.6.48 and 5.6.77]{D}.
If the Banach algebra $A$ is commutative, then $A^*$ is a commutative Banach $A$-bimodule and therefore
a derivation from $A$ into $A^*$ is inner if and only if it is zero.
It is worth pointing out that a basic obstruction to the weak amenability
is the existence of non-zero, continuous point derivations~\cite[Theorem~2.8.63(ii)]{D}. Recall that a linear functional $D$ on $A$
is a \emph{point derivation at} a given multiplicative linear functional $\vartheta$ if
\begin{equation}\label{pd}
D(ab)=D(a)\vartheta(b)+\vartheta(a)D(b) \quad  (a,b\in A).
\end{equation}

Weak amenability and zero Jordan and Lie product determination are related through the following result.

\begin{theorem}\label{t1}\cite{ABEVLie}
Let $A$ be a  weakly amenable Banach algebra with property $\mathbb{B}$ and having a bounded approximate identity.
If a continuous bilinear functional $\varphi \colon A\times A\to\mathbb{C}$ satisfies
\begin{equation*}
a,b\in A, \ ab=ba=0 \ \implies \ \varphi(a,b)=0,
\end{equation*}
then  there exist $\sigma,\tau\in A^*$ such that
\[
\varphi(a,b)= \sigma(a\circ b) + \tau([a,b])\quad (a,b\in A).
\]
\end{theorem}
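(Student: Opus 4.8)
The plan is to mirror the extension argument of Proposition~\ref{1046}, reducing everything to a single ``span'' statement and then producing $\sigma$ and $\tau$ simultaneously by a Hahn--Banach extension. The first observation is that, for fixed $a,b$, the pair of conditions $ab=ba=0$ is equivalent to $a\circ b=0$ together with $[a,b]=0$; indeed $2ab=(a\circ b)+[a,b]$ and $2ba=(a\circ b)-[a,b]$. Thus the hypothesis says precisely that $\varphi$ vanishes whenever both $a\circ b$ and $[a,b]$ vanish. Since $\sigma(a\circ b)$ is symmetric and $\tau([a,b])$ is antisymmetric in $(a,b)$, I would isolate the symmetric and antisymmetric parts of $\varphi$, writing $\varphi=\varphi_{+}+\varphi_{-}$ with $\varphi_{\pm}(a,b)=\tfrac12\bigl(\varphi(a,b)\pm\varphi(b,a)\bigr)$; both inherit the two-sided vanishing property, and it suffices to prove $\varphi_{+}(a,b)=\sigma(a\circ b)$ and $\varphi_{-}(a,b)=\tau([a,b])$.

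The decisive reduction is the following span version of the hypothesis: if $a_1,\dots,a_n,b_1,\dots,b_n\in A$ satisfy $\sum_k a_kb_k=0$ and $\sum_k b_ka_k=0$, then $\sum_k\varphi(a_k,b_k)=0$. Granting this together with a norm estimate, one defines a functional $\omega$ on $\operatorname{span}\{(a\circ b,[a,b]):a,b\in A\}\subseteq A\oplus A$ by $\omega\bigl(\sum_k(a_k\circ b_k,[a_k,b_k])\bigr)=\sum_k\varphi(a_k,b_k)$, checks that it is well defined and continuous, and extends it by the Hahn--Banach theorem to a functional on $A\oplus A$; every such functional has the form $(x,y)\mapsto\sigma(x)+\tau(y)$ with $\sigma,\tau\in A^{*}$, which is exactly the asserted representation. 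So the whole theorem rests on the span version.

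To prove the span version I cannot apply property~$\mathbb{B}$ to $\varphi$ directly, because $\varphi$ only satisfies the two-sided condition $ab=ba=0\Rightarrow\varphi(a,b)=0$, which is strictly weaker than the hypothesis~\eqref{P} that property~$\mathbb{B}$ requires. The key step is therefore to manufacture, out of $\varphi$, auxiliary continuous bilinear maps into suitable dual modules that do satisfy~\eqref{P}, apply property~$\mathbb{B}$ to obtain associativity identities of the form~\eqref{B11}, and then contract against the bounded approximate identity $(e_\lambda)$ to extract candidate linear functionals (as in the passage from~\eqref{P} to~\eqref{PS} recalled in the Introduction, where the factorization theorem gives $A^2=A$). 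For the symmetric part this machinery should yield $\varphi_{+}(a,b)=\sigma(a\circ b)$ outright. For the antisymmetric part it produces a continuous linear map $D\colon A\to A^{*}$, $\langle D(a),b\rangle=\varphi_{-}(a,b)$, and the associativity identities should translate into the statement that $D$ is a derivation for the dual bimodule structure on $A^{*}$. At this point weak amenability enters decisively: $D$ must then be inner, $D=\operatorname{ad}_{\mu}$ for some $\mu\in A^{*}$, and $\langle\operatorname{ad}_{\mu}(a),b\rangle=\mu(ba)-\mu(ab)=-\mu([a,b])$ gives $\varphi_{-}(a,b)=\tau([a,b])$ with $\tau=-\mu$.

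I expect the main obstacle to be precisely the construction and bookkeeping of these auxiliary maps: one must exhibit combinations of $\varphi$, with elements of $A$ inserted on either side of the arguments, that genuinely vanish under the one-sided condition $ab=0$, so that property~$\mathbb{B}$ becomes applicable, and then verify that the resulting identities assemble correctly into the derivation identity for $D$ and into the Jordan factorization for $\varphi_{+}$. Equivalently, in tensor language the difficulty is to show that the functional on $A\widehat{\otimes}A$ induced by $\varphi$ annihilates $\ker\pi\cap(\ker\pi)^{\circ}$ given only that it annihilates the simple tensors $a\otimes b$ lying in this set; property~$\mathbb{B}$ controls $\ker\pi$, the bounded approximate identity supplies the factorization, and weak amenability removes the residual commutator obstruction.
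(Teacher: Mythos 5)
First, a point of reference: this theorem is not proved in the paper at all --- it is quoted from \cite{ABEVLie} as a known result, so there is no internal proof to compare your attempt against; one can only judge it against what a complete proof requires (and against the method of the cited work, which indeed runs through property $\mathbb{B}$, the bounded approximate identity, and a weak-amenability/derivation argument).

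Your structural analysis is sound: the hypothesis is symmetric in $a$ and $b$, so $\varphi_{\pm}$ inherit it; the Hahn--Banach reduction on $A\oplus A$ is valid modulo the norm estimate you postpone; and the intended endgame for $\varphi_{-}$ --- a continuous map $D\colon A\to A^{*}$, $\langle D(a),b\rangle=\varphi_{-}(a,b)$, shown to be a derivation for the dual bimodule structure and hence inner by weak amenability, yielding $\varphi_{-}(a,b)=\tau([a,b])$ --- is exactly the role weak amenability plays in this circle of results. The genuine gap is that the proposal stops precisely where the theorem begins. Your ``decisive reduction'' to the span version is not a reduction but a restatement: the span version (with its norm estimate) is equivalent to the conclusion of the theorem. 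Everything therefore hinges on deriving, from the two-sided hypothesis together with property $\mathbb{B}$ and the bounded approximate identity, the two identities your plan needs: the derivation identity $\varphi_{-}(ab,c)=\varphi_{-}(a,bc)+\varphi_{-}(b,ca)$ and the factorization of the symmetric part through $a\circ b$. You never exhibit the auxiliary bilinear maps satisfying \eqref{P} that are supposed to produce these identities; you only assert that they ``should'' exist and acknowledge that their construction is ``the main obstacle.'' Since, as you yourself note, the two-sided condition is strictly weaker than \eqref{P}, property $\mathbb{B}$ yields nothing until such maps are constructed, and constructing them --- together with the bookkeeping that assembles the resulting associativity relations into the derivation identity and the Jordan factorization --- is the entire mathematical content of the theorem (it is the bulk of the cited paper). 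As written, the proposal is a correct identification of the architecture and of the obstacle, not a proof.
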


\begin{lemma}\label{1401}
Let $A$ be a  weakly amenable Banach algebra with property $\mathbb{B}$ and having a bounded approximate identity.
If a continuous bilinear functional $\varphi \colon A\times A\to\mathbb{C}$ satisfies
\begin{equation*}
a,b\in A, \ a\circ b=0 \ \Longrightarrow \ \varphi(a,b)=0,
\end{equation*}
then  there exist $\sigma,\tau\in A^*$  with $\tau\left( \bigl[[A,A],[A,A]\bigr]\right)=\{0\}$ such that
\[
\varphi(a,b)= \sigma(a\circ b)+\tau([a,b])\quad (a,b\in A).
\]
\end{lemma}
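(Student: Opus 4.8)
The plan is to deduce the statement from Theorem~\ref{t1} and then to convert the Jordan hypothesis into an algebraic constraint on $\tau$ alone. First I would note that the Jordan condition $a\circ b=0\Rightarrow\varphi(a,b)=0$ is \emph{stronger} than the hypothesis of Theorem~\ref{t1}, since $ab=ba=0$ forces $a\circ b=0$. Hence Theorem~\ref{t1} applies and produces $\sigma,\tau\in A^*$ with $\varphi(a,b)=\sigma(a\circ b)+\tau([a,b])$ for all $a,b\in A$. Thus the entire task reduces to establishing the extra constraint $\tau\bigl([[A,A],[A,A]]\bigr)=\{0\}$.

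Next I would extract the pointwise information on $\tau$. Whenever $a\circ b=0$ we have $\sigma(a\circ b)=0$, so the identity $\varphi(a,b)=\sigma(a\circ b)+\tau([a,b])=0$ forces $\tau([a,b])=0$; equivalently, writing $u,v$ for $a,b$, one has $\tau(uv)=0$ whenever $uv+vu=0$ (then $uv-vu=2uv$). A single polarization sharpens this: from $(u+v)\circ(u-v)=2(u^2-v^2)$ together with $[u+v,u-v]=-2[u,v]$, applying the condition to the pair $(u+v,u-v)$ yields the clean implication $u^2=v^2\Rightarrow\tau([u,v])=0$; in particular $\tau([u,v])=0$ whenever $u^2=v^2=0$.

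The heart of the matter is then the purely algebraic implication: if a linear functional $\tau$ satisfies $\tau([u,v])=0$ for every pair with $u^2=v^2$, then $\tau\bigl([[a,b],[c,d]]\bigr)=0$ for all $a,b,c,d$. Since $\tau$ is linear, it suffices to exhibit, for each fixed $a,b,c,d$, finitely many pairs $(u_i,v_i)$ with $u_i^2=v_i^2$ and $[[a,b],[c,d]]=\sum_i[u_i,v_i]$, for then $\tau$ of the left-hand side equals $\sum_i\tau([u_i,v_i])=0$. The matrix model $M_n(\mathbb{C})$ guides the construction: there $e_{ii}-e_{jj}=[e_{ij},e_{ji}]$ with $e_{ij}^2=e_{ji}^2=0$, and each off-diagonal $e_{ij}$ is likewise a commutator of square-zero elements (using a third index, or an anticommuting pair of equal nonzero square when $n=2$), so such commutators already span the trace-zero matrices $[[M_n,M_n],[M_n,M_n]]$, even though they need not span all of $[M_n,M_n]$. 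This dichotomy is the structural reason the conclusion concerns double commutators rather than $[A,A]$ itself.

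I expect this construction to be the main obstacle. No such identity can hold in the free algebra, where $u\circ v$ never vanishes for nonzero $u,v$; equivalently, the antisymmetric form $(u,v)\mapsto\tau([u,v])$ cannot factor through the symmetric map $u\circ v$ (a swap of $u,v$ would negate the left side and fix the right), so the vanishing on Jordan-orthogonal pairs does \emph{not} linearize to vanishing on arbitrary pairs. One must therefore genuinely use the multiplicative structure of $A$ to manufacture, uniformly in $a,b,c,d$, the pairs of equal square whose commutators assemble the given $[[a,b],[c,d]]$ as an honest finite sum, and carrying out this bookkeeping is where the real work lies. The hypotheses of weak amenability, property $\mathbb{B}$, and the bounded approximate identity enter only through Theorem~\ref{t1}; everything past that point is algebraic.
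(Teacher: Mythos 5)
Your first two steps coincide exactly with the paper's proof: Theorem~\ref{t1} applies because $ab=ba=0$ implies $a\circ b=0$, and the polarization with the pair $(a+b,a-b)$ (the paper uses $(a-b,a+b)$) gives $\tau([u,v])=0$ whenever $u^2=v^2$, in particular for square-zero $u,v$. But the third step, which you yourself call ``the heart of the matter'' and ``the main obstacle'', is exactly where your argument stops being a proof: you never produce, for a general $A$ satisfying the hypotheses, the finitely many pairs $(u_i,v_i)$ with $u_i^2=v_i^2$ and $\bigl[[a,b],[c,d]\bigr]=\sum_i[u_i,v_i]$. The matrix heuristics do not transfer, and the demand for an honest finite sum is both unnecessary and, in this generality, hopeless: no universal identity of this kind exists (as you note, in a free algebra the only pairs with $u^2=v^2$ have vanishing commutator), and nothing in the hypotheses (weak amenability, property $\mathbb{B}$, bounded approximate identity) hands you such finite decompositions element by element.

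The paper closes this gap differently, and analytically: it invokes \cite[Theorem 2.1]{ABESV}, which says that every commutator in $A$ lies in the \emph{closed linear span} of square-zero elements, and then uses the bilinearity and continuity of $(x,y)\mapsto\tau([x,y])$ to pass from square-zero pairs to pairs of commutators: if $u,v\in[A,A]$, each is a norm-limit of linear combinations of square-zero elements, so $\tau([u,v])=0$ by continuity, and hence $\tau\left(\bigl[[A,A],[A,A]\bigr]\right)=\{0\}$. The closure is the crucial relaxation your plan forgoes: since $\tau$ is continuous, membership in a closed span is all that is needed, and finite sums are neither available nor required. This also corrects your closing claim that everything past Theorem~\ref{t1} is algebraic: the remaining step in the actual proof is analytic, resting on a nontrivial cited theorem about closed spans in Banach algebras rather than on bookkeeping with algebraic identities.
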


\begin{proof}
If $a,b\in A$ are such that $ab=ba=0$, then $a\circ b=0$ and so $\varphi(a,b)=0$.
Therefore $\varphi$ satisfies the assumption in Theorem~\ref{t1}. Hence
there exist $\sigma,\tau\in A^*$ such that
\begin{equation}\label{1906}
\varphi(a,b)= \sigma(a\circ b) + \tau([a,b])\quad (a,b\in A).
\end{equation}
Suppose that $a,b\in A$ are such that $a^2=b^2$. Then
\[
(a-b)\circ(a+b)=0,
\]
and \eqref{1906} yields
\[
0=\varphi(a-b,a+b)=\tau([a-b,a+b])=2\tau([a,b]).
\]
In particular, for all square-zero elements $a,b\in A$ we have
\[
\tau([a,b])=0.
\]

By \cite[Theorem 2.1]{ABESV}, every commutator in A
lies in the closed linear span of square-zero elements,
and therefore $\tau\left(\bigl[[A,A],[A,A]\bigr]\right)=\{0\}$.
\end{proof}

\begin{theorem}\label{1400}
Let $A$ be a symmetrically amenable Banach algebra. Then
\[
[a,A]\subset
\overline{\bigl[a,[A,A]\bigr]}
\]
for each $a\in A$.
Consequently,
\[
\overline{[A,A]}=\overline{\bigl[[A,A],[A,A]\bigr]}.
\]
\end{theorem}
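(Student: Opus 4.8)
The plan is to prove the pointwise inclusion $[a,A]\subseteq\overline{[a,[A,A]]}$ first, and then obtain the displayed identity by a short formal argument. Fix $a,z\in A$; since $z\mapsto[a,z]$ is linear, it suffices to show $[a,z]\in\overline{[a,[A,A]]}$. Let $(\mathbf{t}_\lambda)_{\lambda\in\Lambda}$ be a symmetric approximate diagonal and write $\mathbf{t}_\lambda=\sum_i x_i^\lambda\otimes y_i^\lambda$ as an absolutely convergent series in $A\widehat{\otimes}A$; every map used below is continuous and linear, so it acts termwise on such series.

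First I would isolate an error term that vanishes in the limit. Consider the continuous linear map $m_z\colon A\widehat{\otimes}A\to A$ determined by $m_z(x\otimes y)=xzy$. A direct check on simple tensors gives $m_z(a\cdot\mathbf{t}_\lambda)=a\,m_z(\mathbf{t}_\lambda)$ and $m_z(\mathbf{t}_\lambda\cdot a)=m_z(\mathbf{t}_\lambda)\,a$, whence $m_z(a\cdot\mathbf{t}_\lambda-\mathbf{t}_\lambda\cdot a)=[a,m_z(\mathbf{t}_\lambda)]$. By~\eqref{E1} and the continuity of $m_z$ we obtain $[a,m_z(\mathbf{t}_\lambda)]\to0$.

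The heart of the argument is to produce, for each $\lambda$, an element of $[a,[A,A]]$ converging to $[a,z]$. I would take $R_\lambda=\sum_i[a,[x_i^\lambda z,\,y_i^\lambda]]$, which lies in $[a,[A,A]]$ since each $[x_i^\lambda z,y_i^\lambda]$ is a commutator. Expanding $[x_i^\lambda z,y_i^\lambda]=x_i^\lambda z y_i^\lambda-y_i^\lambda x_i^\lambda z$ and summing, the first part is $m_z(\mathbf{t}_\lambda)$ and the second is $(\sum_i y_i^\lambda x_i^\lambda)\,z$. Here the symmetry $\mathbf{t}_\lambda^\circ=\mathbf{t}_\lambda$ is decisive: applying $\pi$ to $\mathbf{t}_\lambda^\circ=\mathbf{t}_\lambda$ gives $\sum_i y_i^\lambda x_i^\lambda=\pi(\mathbf{t}_\lambda)$, so that $\sum_i[x_i^\lambda z,y_i^\lambda]=m_z(\mathbf{t}_\lambda)-\pi(\mathbf{t}_\lambda)z$ and hence $R_\lambda=[a,m_z(\mathbf{t}_\lambda)]-[a,\pi(\mathbf{t}_\lambda)z]$. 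The first term tends to $0$ by the previous paragraph, while~\eqref{E2} gives $\pi(\mathbf{t}_\lambda)z\to z$ and therefore $[a,\pi(\mathbf{t}_\lambda)z]\to[a,z]$. Thus $-R_\lambda\to[a,z]$ with each $-R_\lambda\in[a,[A,A]]$, which yields $[a,z]\in\overline{[a,[A,A]]}$, as desired.

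Finally I would deduce $\overline{[A,A]}=\overline{[[A,A],[A,A]]}$. The inclusion $\supseteq$ is clear, so it suffices to place every commutator $[a,b]$ in the closed subspace $\overline{[[A,A],[A,A]]}$. By the first part there are $c_n\in[A,A]$ with $[a,c_n]\to[a,b]$; writing $[a,c_n]=-[c_n,a]$ and applying the first part again, now with $c_n\in[A,A]$ in the role of the fixed element, gives $[c_n,a]\in\overline{[c_n,[A,A]]}\subseteq\overline{[[A,A],[A,A]]}$. Letting $n\to\infty$ shows $[a,b]\in\overline{[[A,A],[A,A]]}$, and passing to closed linear spans finishes the proof. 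I expect the main obstacle to be the discovery of the right combination $R_\lambda$: one needs an expression manifestly lying in $[a,[A,A]]$ whose limit is $[a,z]$ and whose unwanted part is exactly $[a,m_z(\mathbf{t}_\lambda)]$, which the approximate centrality~\eqref{E1} annihilates. The symmetry hypothesis enters at a single point, replacing $\sum_i y_i^\lambda x_i^\lambda$ by $\pi(\mathbf{t}_\lambda)$; without it one is left with $\pi(\mathbf{t}_\lambda^\circ)z$, and it is precisely the control of this term where a general approximate diagonal would not obviously suffice.
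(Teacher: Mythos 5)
Your proposal is correct and is essentially the paper's own argument in a different notation: since $\pi(\mathbf{t}\star z)=m_z(\mathbf{t})$ and the tensors are symmetric, your net $-R_\lambda=[a,\pi(\mathbf{t}_\lambda)z]-[a,m_z(\mathbf{t}_\lambda)]$ coincides term-for-term with the expression the paper builds in~\eqref{E3}, with~\eqref{E1} annihilating the same error term and~\eqref{E2} producing the same main term, and your double application of the first part to get $\overline{[A,A]}=\overline{\bigl[[A,A],[A,A]\bigr]}$ mirrors the paper's $\varepsilon/2$-argument. The only (harmless) imprecision is that when $\mathbf{t}_\lambda$ is given by an infinite absolutely convergent series, $R_\lambda$ lies in $\overline{\bigl[a,[A,A]\bigr]}$ rather than in $\bigl[a,[A,A]\bigr]$ itself, which is all the argument needs.
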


\begin{proof}
We first prove that
\begin{equation}\label{E3}
\pi(\mathbf{t})ab-b\pi(\mathbf{t})a+
\pi\bigl((b\cdot\mathbf{t}^\circ-\mathbf{t}^\circ\cdot b)\star a\bigr)\in \bigl[b,[A,A]\bigr]
\end{equation}
for all $a,b\in A$ and $\mathbf{t}\in A\widehat{\otimes}A$,
where $\star$ is the operation on $A\widehat{\otimes}A$ defined, for simple tensors, by
\[
(u\otimes v)\star a=(ua)\otimes v \quad (u,v,a\in A).
\]
It suffices to check the formula for elementary tensors $\mathbf{t}=u\otimes v$ with $u,v\in A$.
We have
\begin{align*}
\pi(\mathbf{t})ab-b\pi(\mathbf{t})a+
\pi\bigl((b\cdot\mathbf{t}^\circ-\mathbf{t}^\circ\cdot b)\star a\bigr) & =
uvab-buva \\
& \quad {}+ \pi\bigl((bva)\otimes u-(va)\otimes (ub)\bigr) \\
& = uvab-buva+bvau-vaub\\
& =\bigl[[u,va],b\bigr].
\end{align*}

Let $(\mathbf{t}_\lambda)_{\lambda\in\Lambda}$ be an approximate diagonal for
$A$ consisting of symmetric tensors. From~\eqref{E1} and~\eqref{E2}
we deduce that
\begin{equation}\label{E4}
\lim_{\lambda\in\Lambda}
\Bigl(
\pi(\mathbf{t}_\lambda)ab-b\pi(\mathbf{t}_\lambda)a+
\pi\bigl((b\cdot\mathbf{t}_\lambda-\mathbf{t}_\lambda\cdot b)\star a\bigr)
\Bigr)=[a,b].
\end{equation}
Finally, \eqref{E3} and \eqref{E4} yield $[a,b]\in\overline{\bigl[b,[A,A]\bigr]}$.

In order to prove the last assertion,
set $a,b\in A$, and let $\varepsilon\in\mathbb{R}^+$.
Then there exists $u\in [A,A]$ such that
\[
\left\Vert [a,b]-[a,u] \right\Vert<\varepsilon/2
\]
and there exists $v\in [A,A]$ such that
\[
\Vert [u,a]-[u,v]\Vert<\varepsilon/2.
\]
Then $[u,v]\in\bigl[[A,A],[A,A]\bigr]$ and
\[
\Vert [a,b]-[v,u]\Vert\le
\Vert [a,b]-[a,u]\Vert+\Vert[u,a]-[u,v]\Vert<\varepsilon. \qedhere
\]
\end{proof}

Since the group algebra $L^1(G)$ is symmetrically amenable for each locally compact amenable  group $G$ (\cite[Theorem 4.1]{J2}),
the following result is an obvious consequence of Theorem~\ref{1400}.

\begin{corollary}\label{1550}
Let $G$ be an amenable locally compact group.
Then
\[
[f,L^1(G)]\subset
\overline{\bigl[f,[L^1(G),L^1(G)]\bigr]}
\]
for each $f\in L^1(G)$.
Consequently,
\[
\overline{[L^1(G),L^1(G)]}=\overline{\bigl[[L^1(G),L^1(G)],[L^1(G),L^1(G)]\bigr]}.
\]
\end{corollary}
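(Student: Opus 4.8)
The plan is to obtain both assertions as the special case $A = L^1(G)$ of Theorem~\ref{1400}. That theorem is stated for an arbitrary symmetrically amenable Banach algebra, so the entire task reduces to verifying that $L^1(G)$ meets this hypothesis and then reading off the two conclusions verbatim.

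First I would recall the theorem of Johnson (\cite[Theorem 4.1]{J2}) asserting that, for every amenable locally compact group $G$, the group algebra $L^1(G)$ is symmetrically amenable; that is, $L^1(G)$ admits a bounded approximate diagonal in $L^1(G)\widehat{\otimes}L^1(G)$ consisting of symmetric tensors. This is the sole prerequisite for applying Theorem~\ref{1400}, and it is exactly the step where the amenability assumption on $G$ is consumed.

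With this in place, I would substitute $A = L^1(G)$ and $a = f$ into the first conclusion of Theorem~\ref{1400} to obtain $[f, L^1(G)] \subset \overline{[f, [L^1(G), L^1(G)]]}$ for each $f \in L^1(G)$, and then substitute $A = L^1(G)$ into the second conclusion to obtain the displayed equality of closures. No independent computation or limiting argument is needed beyond these substitutions, since Theorem~\ref{1400} already carries out the work with the symmetric approximate diagonal.

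The main—and indeed only—point requiring attention is the transfer from amenability of $G$ to symmetric amenability of $L^1(G)$; since this is precisely the content of Johnson's cited result, there is no genuine obstacle to overcome, and the corollary follows at once as a specialization.
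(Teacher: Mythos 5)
Your proposal is correct and is exactly the paper's argument: the paper deduces Corollary~\ref{1550} as an immediate consequence of Theorem~\ref{1400}, citing Johnson's result (\cite[Theorem 4.1]{J2}) that $L^1(G)$ is symmetrically amenable whenever $G$ is an amenable locally compact group. Nothing further is needed.
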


\begin{corollary}\label{1552}
$A$ be a $C^*$-algebra. Then $\overline{[A,A]}=\overline{\bigl[[A,A],[A,A]\bigr]}$.
\end{corollary}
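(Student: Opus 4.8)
The plan is to avoid Theorem~\ref{1400} altogether, since a $C^*$-algebra need not be amenable, let alone symmetrically amenable, and to argue instead through square-zero elements and the continuous functional calculus. The inclusion $\overline{\bigl[[A,A],[A,A]\bigr]}\subseteq\overline{[A,A]}$ is trivial because $\bigl[[A,A],[A,A]\bigr]\subseteq[A,A]$, so the whole problem is to prove $[A,A]\subseteq\overline{\bigl[[A,A],[A,A]\bigr]}$. By \cite[Theorem~2.1]{ABESV} every commutator lies in the closed linear span of the square-zero elements of $A$, exactly as exploited in the proof of Lemma~\ref{1401}; since $\overline{\bigl[[A,A],[A,A]\bigr]}$ is a closed subspace, it therefore suffices to show that every $e\in A$ with $e^2=0$ lies in $\overline{\bigl[[A,A],[A,A]\bigr]}$.

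Fix such an $e$ and put $a=e^*e$. From $e^2=0$ one reads off $ae=0$ and $(ee^*)(e^*e)=0$, so $ee^*$ and $e^*e$ are orthogonal positive elements and, heuristically, $e$ occupies the off-diagonal corner of a $2\times2$ system of matrix units. In that picture the self-adjoint symmetry $h=vv^*-v^*v=[v,v^*]$ built from the polar part $v$ of $e$ satisfies $[h,e]=2e$, and since both $h$ and $e$ are then commutators this already displays $e=\tfrac12[h,e]$ as a double commutator. The difficulty --- and the point where the $C^*$-case genuinely departs from the amenable one --- is that a $C^*$-algebra may be projectionless, so the partial isometry $v$, the symmetry $h$, and the underlying support projections need not belong to $A$; moreover any naive replacement converges only strongly, whereas we need norm convergence. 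I would resolve this by an approximate polar decomposition: pick continuous functions $g_n\ge 0$ with $g_n(0)=0$ such that $\phi_n(t):=tg_n(t)^2$ obeys $0\le\phi_n\le1$, $\phi_n(0)=0$, and $\phi_n(t)=1$ for $t\ge 1/n$, and set $v_n=eg_n(a)\in A$.

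A short computation with the functional calculus, using only $e^2=0$ and $ae=0$, gives $v_nv_n^*=eg_n(a)^2e^*$ and $v_n^*v_n=\phi_n(a)$, and hence
\[
\bigl[[v_n,v_n^*],e\bigr]=2e\phi_n(a),\qquad [v_nv_n^*,e]=e\phi_n(a).
\]
The norm estimate $\|e-e\phi_n(a)\|^2=\bigl\|a\bigl(1-\phi_n(a)\bigr)^2\bigr\|\le 1/n$ shows $e\phi_n(a)\to e$ in norm. The second identity, with $v_nv_n^*\in A$, gives $e\in\overline{[A,A]}$. For the first identity, $[v_n,v_n^*]\in[A,A]$, and the continuous map $w\mapsto\bigl[[v_n,v_n^*],w\bigr]$ sends $[A,A]$ into $\bigl[[A,A],[A,A]\bigr]$, hence sends $\overline{[A,A]}$ into $\overline{\bigl[[A,A],[A,A]\bigr]}$; in particular $\tfrac12\bigl[[v_n,v_n^*],e\bigr]\in\overline{\bigl[[A,A],[A,A]\bigr]}$. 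Letting $n\to\infty$ yields $e=\lim_n\tfrac12\bigl[[v_n,v_n^*],e\bigr]\in\overline{\bigl[[A,A],[A,A]\bigr]}$, which completes the reduction and hence the proof. The one real obstacle is the passage from the clean matrix-unit/polar picture to a genuinely norm-convergent construction inside a possibly projectionless $A$, and the identity $\bigl[[v_n,v_n^*],e\bigr]=2e\phi_n(a)$ together with the spectral estimate is exactly what overcomes it.
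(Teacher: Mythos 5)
Your proof is correct, and it follows a genuinely different route from the paper's. The paper proceeds in two stages: for a von Neumann algebra it takes projections $p,q$, forms the unitaries $u=2p-1$, $v=2q-1$, observes that the group $G$ they generate is a quotient of the infinite dihedral group and hence amenable, and pushes Corollary~\ref{1550} (Theorem~\ref{1400} applied to the symmetrically amenable algebra $\ell^1(G)$) into $A$ through the homomorphism $\Phi\colon\ell^1(G)\to A$, using that a von Neumann algebra is the closed linear span of its projections; for a general $C^*$-algebra it then argues by duality, showing through iterated weak$^*$ limits and separate weak$^*$ continuity of the product in $A^{**}$ that a functional annihilating $\bigl[[A,A],[A,A]\bigr]$ must annihilate $\bigl[[A^{**},A^{**}],[A^{**},A^{**}]\bigr]$, and then invokes the von Neumann case. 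You avoid the amenability input, the group algebra, and the bidual altogether: your reduction via \cite[Theorem~2.1]{ABESV} to square-zero elements is legitimate (every $C^*$-algebra has property $\mathbb{B}$, a bounded approximate identity, and is weakly amenable, and the paper itself applies that theorem to $C^*$-algebras through Lemma~\ref{1401}, so no new dependency is introduced), and your functional-calculus computations check out: $ae=0$ and $\phi_n(a)e=0$ (approximate $\phi_n$ by polynomials with zero constant term) give $[v_nv_n^*,e]=e\phi_n(a)$ and $[v_n^*v_n,e]=-e\phi_n(a)$, hence $\bigl[[v_n,v_n^*],e\bigr]=2e\phi_n(a)$; the $C^*$-identity gives $\Vert e-e\phi_n(a)\Vert^2=\Vert a(1-\phi_n(a))^2\Vert\le 1/n$; admissible $g_n$ do exist (for example $g_n(t)=n\sqrt{t}$ on $[0,1/n]$ and $g_n(t)=t^{-1/2}$ for $t\ge 1/n$); and the two-step argument---first $e\in\overline{[A,A]}$ from the single commutator $[v_nv_n^*,e]$, then applying the continuous map $w\mapsto\bigl[[v_n,v_n^*],w\bigr]$, which carries $[A,A]$ into $\bigl[[A,A],[A,A]\bigr]$ and hence $\overline{[A,A]}$ into $\overline{\bigl[[A,A],[A,A]\bigr]}$---is sound. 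As to what each approach buys: the paper's route reuses machinery it needs anyway (Theorem~\ref{1400} and Corollary~\ref{1550}) and works in an involution-free Banach-algebraic framework, exhibiting the transfer principle from group algebras to von Neumann algebras; yours is shorter, stays entirely inside $A$ with no passage to $A^{**}$, and exploits exactly the $C^*$-specific tools (adjoints, positivity, functional calculus), at the price of leaning on the external square-zero theorem of \cite{ABESV}, on which the paper's main theorem in any case already relies.
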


\begin{proof}
We first assume that $A$ is a von Neumann algebra.
Let $p,q\in A$ be two projections, and take $u=2p-1, v=2q-1\in A$. 
Then $u$ and $v$ are unitary, and we write $G$ for the subgroup of the unitary group of $A$ generated by $u$ and $v$.
Let $D_\infty$ be the infinite dihedral group; this is the group generated
by two elements $s$ and $t$ with the relations $s^2=e$ and $t^2=e$.
Since $u^2=1$ and $v^2=1$, we conclude that there exists a
homomorphism from $D_\infty$ onto $G$.
Since $D_\infty$ is solvable, it follows that $G$ is also solvable, and hence amenable (\cite[Proposition 3.3.61]{D});
here, $G$ is equipped with the discrete topology.
Since $G$ consists of unitary elements, we see that $\Vert w\Vert =1$ for each $w\in G$, and therefore, 
for each $a\in\ell^1(G)$, we have
\[
\sum_{w\in G}\Vert a(w)w\Vert=\sum_{w\in G}\vert a(w)\vert=\Vert a\Vert_{\ell^1(G)}.
\]
Consequently, we can define a continuous homomorphism $\Phi\colon\ell^1(G)\to A$ by
\[
\Phi(a)=\sum_{w\in G}a(w)w\quad (a\in\ell^1(G)).
\]
Further, $\Phi(\delta_1)=1$, $\Phi(\delta_u)=u$, and $\Phi(\delta_v)=v$,
where $\delta_w$ stands for the characteristic function of the point $w\in G$.
From Corollary \ref{1550} we now deduce that
\[
[\delta_u+\delta_1,\delta_v+\delta_1]\in
\overline{\bigl[[\ell^1(G),\ell^1(G)],[\ell^1(G),\ell^1(G)]\bigr]},
\]
whence
\begin{equation*}
\begin{split}
[p,q]
&=
\Big[\Phi\Big(\frac{\delta_u+\delta_1}{2}\Big),\Phi\Big(\frac{\delta_v+\delta_1}{2}\Big)\Big]\\
&=\frac{1}{4}
\Phi\bigl([\delta_u+\delta_1,\delta_v+\delta_1]\bigr)\\
&\in
\Phi\left(\overline{\bigl[[\ell^1(G),\ell^1(G)],[\ell^1(G),\ell^1(G)]\bigr]}\right)\\
&\subset
\overline{\Phi\bigl(\bigl[[\ell^1(G),\ell^1(G)],[\ell^1(G),\ell^1(G)]\bigr]\bigr)}\\
&\subset
\overline{\bigl[[A,A],[A,A]\bigr]}.
\end{split}
\end{equation*}
Since $A$ is the closed linear span of its idempotents, it follows that
\[
[a,b]\in\overline{\bigl[[A,A],[A,A]\bigr]}
\]
for all $a,b\in A$.

We now consider the case when $A$ is an arbitrary $C^*$-algebra, and suppose that
the result does not hold. Then there exist $a,b\in A$ and $\sigma\in A^*$ such that
\begin{equation}\label{182}
\sigma\bigl( [a,b]\bigr)\ne 0
\end{equation}
and
\begin{equation}\label{1823}
\sigma\bigl(\bigl[[A,A],[A,A]\bigr]\bigr)=0.
\end{equation}
We then consider the von Neumann algebra $A^{**}$ and elements $x,y,z,w\in A^{**}$.
There exists nets $(a_i)$, $(b_j)$, $(c_k)$, and $(d_l)$ such that
\[
\lim a_i=x, \quad
\lim b_j=y, \quad
\lim c_k=z, \quad
\lim d_l=w,
\]
with respect to the weak$^*$ topology on $A^{**}$. By~\eqref{1823},
\[
\sigma
\Bigl(\bigl[[a_i,b_j],[c_k,d_l]\bigr]\Bigr)=0  \quad (i,j,k,l).
\]
Taking limits in the preceding equation, and using the separate continuity with respect to the weak$^*$ topology
of the product in $A^{**}$, we see that
\begin{align*}
0 & =\lim_i\lim_j\lim_k\lim_l
\sigma\bigl(\bigl[[a_i,b_j],[c_k,d_l]\bigr]\bigr) \\
& = \lim_i\lim_j\lim_k
\sigma\bigl(\bigl[[a_i,b_j],[c_k,w]\bigr]\bigr) \\
& = \lim_i\lim_j
\sigma\bigl(\bigl[[a_i,b_j],[z,w]\bigr]\bigr) \\
& = \lim_i
\sigma\bigl(\bigl[[a_i,y],[z,w]\bigr]\bigr) \\
& = \sigma\bigl(\bigl[[x,y],[z,w]\bigr]\bigr).
\end{align*}
Since $A^{**}$ is a von Neumann algebra, it follows that
\[
[a,b]\in\overline{\bigl[[A^{**},A^{**}],[A^{**},A^{**}]\bigr]}
\]
(the closure being taken with respect to the norm topology) and therefore that
$\sigma\bigl( [a,b]\bigr)=0$, which contradicts~\eqref{182}.
\end{proof}

From Proposition \ref{1046}, Lemma \ref{1401}, and Corollaries \ref{1550} and \ref{1552} we obtain the following.

\begin{theorem}
The following Banach algebras are zero Jordan product determined:
\begin{enumerate}
\item
the group algebra $L^1(G)$ of an amenable locally compact group;
\item
every $C^*$-algebra.
\end{enumerate}
\end{theorem}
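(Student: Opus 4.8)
The plan is to reduce the statement to the scalar-valued case and then assemble the four cited results. By Proposition~\ref{1046}, it suffices to show that, in each of the two cases, every continuous bilinear functional $\varphi\colon A\times A\to\mathbb{C}$ satisfying~\eqref{J} admits the representation~\eqref{S}. To invoke Lemma~\ref{1401} I must first verify that both classes of algebras meet its hypotheses: weak amenability, property $\mathbb{B}$, and the existence of a bounded approximate identity. For a $C^*$-algebra all three are standard (weak amenability is \cite[Theorem 5.6.48]{D}, property $\mathbb{B}$ is established in \cite{ABEV0}, and every $C^*$-algebra has a bounded approximate identity); for $L^1(G)$ with $G$ amenable, weak amenability is \cite[Theorem 5.6.77]{D}, property $\mathbb{B}$ again comes from \cite{ABEV0}, and $L^1(G)$ always possesses a bounded approximate identity. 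Thus Lemma~\ref{1401} applies in both cases.

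Granting this, Lemma~\ref{1401} produces $\sigma,\tau\in A^*$ with
\[
\varphi(a,b)=\sigma(a\circ b)+\tau([a,b])\qquad(a,b\in A)
\]
and with $\tau\bigl(\bigl[[A,A],[A,A]\bigr]\bigr)=\{0\}$. The crux is then to show that the commutator term is spurious, i.e.\ that $\tau([a,b])=0$ for all $a,b\in A$; once this is known, $\varphi(a,b)=\sigma(a\circ b)$ is exactly~\eqref{S}, and Proposition~\ref{1046} finishes the proof.

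To eliminate the $\tau$-term I would use continuity together with the two commutator identities. Since $\tau$ is continuous and annihilates $\bigl[[A,A],[A,A]\bigr]$, it annihilates the closed subspace $\overline{\bigl[[A,A],[A,A]\bigr]}$. Now Corollary~\ref{1550} (for the amenable group algebra) and Corollary~\ref{1552} (for a $C^*$-algebra) assert precisely that $\overline{[A,A]}=\overline{\bigl[[A,A],[A,A]\bigr]}$, so $\tau$ vanishes on $\overline{[A,A]}$ and in particular on every commutator $[a,b]$. Hence $\tau([a,b])=0$ for all $a,b\in A$, as required.

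The genuine difficulty of the theorem has already been absorbed into the cited results, so the main obstacle here is conceptual rather than computational: one must recognize that the offending commutator contribution is controlled by the second commutator subspace $\bigl[[A,A],[A,A]\bigr]$, and that the identity collapsing $\overline{[A,A]}$ onto $\overline{\bigl[[A,A],[A,A]\bigr]}$ is exactly what is needed to kill it. The only analytic point requiring care is the passage from ``$\tau$ vanishes on $\bigl[[A,A],[A,A]\bigr]$'' to ``$\tau$ vanishes on its closure'', which is immediate from the continuity of $\tau$ but is the hinge on which the argument turns.
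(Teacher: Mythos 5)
Your proposal is correct and takes essentially the same route as the paper, which obtains this theorem directly by combining Proposition~\ref{1046}, Lemma~\ref{1401}, and Corollaries~\ref{1550} and~\ref{1552}. You have merely made explicit the details the paper leaves to the reader: the verification of the hypotheses of Lemma~\ref{1401} for both classes of algebras, and the observation that continuity of $\tau$ lets the identity $\overline{[A,A]}=\overline{\bigl[[A,A],[A,A]\bigr]}$ kill the commutator term.
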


\begin{question}
Is the group algebra $L^1(G)$ a zero Jordan product determined Banach algebra for each locally compact group $G$?
\end{question}

We will now provide an example of  a noncommutative Banach algebra that is not zero Jordan product determined.
As a matter of fact, we will only slightly modify the discussion from \cite{ABEVLie} on examples of Banach algebras that are not zero product determined. Recall that a Banach algebra $A$ is said to be {\em essential} if $\overline{A^2} = A$.

\begin{lemma}
If a Banach algebra $A$ is essential and zero Jordan product determined,
then there are no non-zero, continuous point derivations on $A$.
\end{lemma}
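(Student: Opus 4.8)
The plan is to argue by contradiction: assuming that $A$ carries a non-zero continuous point derivation, I will manufacture a continuous bilinear functional that satisfies \eqref{J} but fails to be of the standard form \eqref{S}, contradicting the hypothesis that $A$ is zero Jordan product determined. So suppose that $D\in A^*$ is a non-zero continuous point derivation at a multiplicative linear functional $\vartheta$ of $A$, so that \eqref{pd} holds. Since $\vartheta$ is a non-zero multiplicative linear functional it is automatically continuous, and hence the scalar-valued bilinear functional
\[
\varphi\colon A\times A\to\mathbb{C},\qquad \varphi(a,b)=D(a)\vartheta(b),
\]
is continuous. By Proposition~\ref{1046} it suffices to work with such a $\mathbb{C}$-valued map, and to show that this particular $\varphi$ is not expressible in the form \eqref{S}.

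The first key step is to verify that $\varphi$ satisfies \eqref{J}. Suppose $a\circ b=0$. Applying $\vartheta$ and using its multiplicativity gives $2\vartheta(a)\vartheta(b)=\vartheta(a\circ b)=0$, so $\vartheta(a)\vartheta(b)=0$; applying $D$ and using \eqref{pd} gives $2\bigl(D(a)\vartheta(b)+\vartheta(a)D(b)\bigr)=D(a\circ b)=0$, so $D(a)\vartheta(b)+\vartheta(a)D(b)=0$. A short case distinction then finishes this step: if $\vartheta(b)=0$ then $\varphi(a,b)=D(a)\vartheta(b)=0$, while if $\vartheta(b)\neq0$ then $\vartheta(a)=0$ by the first relation, and the second relation gives $\varphi(a,b)=D(a)\vartheta(b)=-\vartheta(a)D(b)=0$. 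Thus $\varphi$ satisfies \eqref{J}. I expect this verification to be entirely routine.

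The main obstacle, and the heart of the argument, is to show that $\varphi$ cannot be written in the form \eqref{S}. Here I would exploit that every functional of the standard form is symmetric: if $\varphi(a,b)=\sigma(a\circ b)$ for some $\sigma\in A^*$, then $\varphi(a,b)=\varphi(b,a)$ because $a\circ b=b\circ a$. So it is enough to show that $\varphi$ is symmetric only when $D=0$. Symmetry reads $D(a)\vartheta(b)=D(b)\vartheta(a)$ for all $a,b\in A$; fixing some $b_0$ with $\vartheta(b_0)\neq0$ (which exists since $\vartheta\neq0$) forces $D=\lambda\vartheta$ with $\lambda=D(b_0)/\vartheta(b_0)$. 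Substituting $D=\lambda\vartheta$ into \eqref{pd} yields $\lambda\vartheta(a)\vartheta(b)=2\lambda\vartheta(a)\vartheta(b)$, that is $\lambda\vartheta(a)\vartheta(b)=0$ for all $a,b$; choosing $a,b$ with $\vartheta(a)\vartheta(b)\neq0$ gives $\lambda=0$ and hence $D=0$, contrary to assumption.

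Consequently, for a non-zero $D$ the functional $\varphi$ satisfies \eqref{J} but is not of the form \eqref{S}, so $A$ is not zero Jordan product determined; this contradiction proves the lemma. The delicate point is the symmetry argument of the previous paragraph, since the verification of \eqref{J} is mechanical. I note that essentiality of $A$ does not appear to enter this construction explicitly and is presumably retained to parallel the analogous statement for zero product determined Banach algebras in \cite{ABEVLie} on which this argument is modelled; if a genuine use were required, I would look for it in ensuring that \eqref{S} is the correct obstruction being tested.
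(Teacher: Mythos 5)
Your construction coincides with the paper's (you take $\varphi(a,b)=D(a)\vartheta(b)$ where the paper takes $\vartheta(a)D(b)$), and both your verification of \eqref{J} and your symmetry argument ruling out the form \eqref{S} are correct as far as they go. But there is a genuine gap, and it sits exactly where you yourself flag uncertainty at the end: you assert at the outset that $\vartheta$ is non-zero, and you use this crucially to pick $b_0$ with $\vartheta(b_0)\neq 0$ in the symmetry step. Nothing in the definition \eqref{pd} forces $\vartheta\neq 0$: the zero functional is multiplicative, and a point derivation at $\vartheta=0$ is simply a continuous functional vanishing on $A^2$. If $\vartheta=0$, your $\varphi$ is identically zero, is trivially of the form \eqref{S}, and your argument produces no contradiction.

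This degenerate case is precisely where essentiality enters, and it is not an ornamental hypothesis. If $\vartheta=0$, then \eqref{pd} gives $D(A^2)=\{0\}$, so by continuity $D$ vanishes on $\overline{A^2}=A$, contradicting $D\neq 0$; this is exactly how the paper uses the hypothesis (``Since $A$ is essential, $\vartheta\ne 0$''), and that one line closes your gap. Moreover, the lemma genuinely fails without essentiality: take any non-zero Banach space $A$ equipped with the zero product. Then $a\circ b=0$ for all $a,b\in A$, so condition \eqref{J} forces $\varphi=0$, which is of the form \eqref{S} with $\sigma=0$; hence $A$ is (vacuously) zero Jordan product determined. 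Yet every non-zero continuous linear functional on $A$ is a non-zero continuous point derivation at $\vartheta=0$. So your closing speculation that essentiality is merely retained to parallel the zero product determined case is incorrect --- it is needed to license the very assumption $\vartheta\neq 0$ on which your symmetry argument rests.
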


\begin{proof}
Suppose there exists a non-zero, continuous point derivation $D$ on $A$ at a multiplicative functional $\vartheta$. Since $A$ is essential, $\vartheta\ne 0$.
Define a continuous bilinear functional $\varphi\colon A\times A\to\mathbb{C}$ by
\[
\varphi(a,b)=\vartheta(a)D(b) \quad  (a,b\in A).
\]
Take $a,b\in A$ such that $a\circ b=0$. Our goal is to show that  $\varphi(a,b)=0$. Obviously, $a\circ b=0$ implies $\vartheta(a)\vartheta(b)=0$ and hence either $\vartheta(a)=0$ or $\vartheta(b)=0$.
If $\vartheta(a)=0$, then $\varphi(a,b)=0$. We may thus assume that $\vartheta(b)=0$.
But then
\[
0=D(ab+ba)=\vartheta(a)D(b) + D(b)\vartheta(a)= 2 \varphi(a,b),
\]
so $\varphi(a,b)=0$ in this case too. Since $A$ is zero Jordan product determined, $\varphi$ is, in particular, symmetric. That is,
\[
\vartheta(a)D(b) = \vartheta(b)D(a)\quad  (a,b\in A).
\]
Since $\vartheta\ne 0$, this implies that $D$ is scalar multiple of $\vartheta$.
However, this is clearly impossible in light of~\eqref{pd}.
\end{proof}

Referring to the proofs of \cite[Proposition~2.3, Corollary~2.4]{ABEVLie}, 
we have that  the Banach space $X$ constructed by Read  \cite{R} has the 
property that the Banach algebra $\mathcal{B}(X)$ of all bounded linear 
operators on $X$ has a non-zero, continuous point derivation. Hence, 
the following is true.

\begin{proposition}%\label{r855}
There exists a Banach space $X$ such that $\mathcal{B}(X)$ is not 
zero Jordan product determined.
\end{proposition}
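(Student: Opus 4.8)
The plan is to combine the immediately preceding lemma with the cited results about Read's construction, so the work reduces almost entirely to quoting the right facts. The key observation is that the lemma just proved gives a contrapositive criterion: \emph{if} $A$ is essential and admits a non-zero continuous point derivation, \emph{then} $A$ cannot be zero Jordan product determined. So I would first arrange to have in hand a concrete $A$ that is essential and carries a non-zero continuous point derivation, and then simply invoke the lemma.

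For the concrete construction, I would take the Banach space $X$ built by Read in \cite{R}, and set $A=\mathcal{B}(X)$. The paragraph immediately before the proposition already records (referring to \cite[Proposition~2.3, Corollary~2.4]{ABEVLie}) that for this particular $X$ the algebra $\mathcal{B}(X)$ carries a non-zero continuous point derivation; so I would cite that fact directly rather than reprove it. The one remaining hypothesis of the lemma to verify is that $\mathcal{B}(X)$ is essential, i.e.\ $\overline{\mathcal{B}(X)^2}=\mathcal{B}(X)$. This is immediate because $\mathcal{B}(X)$ is unital: the identity operator $I$ satisfies $T=IT\in\mathcal{B}(X)^2$ for every $T$, so in fact $\mathcal{B}(X)^2=\mathcal{B}(X)$ already, with no closure needed.

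With both hypotheses in place, the argument closes in one line: by the lemma, an essential zero Jordan product determined Banach algebra has no non-zero continuous point derivations, whereas $\mathcal{B}(X)$ has one; hence $\mathcal{B}(X)$ is not zero Jordan product determined, which is exactly the statement. I do not anticipate a genuine obstacle here—the conceptual content lives entirely in the preceding lemma and in Read's example—so the only thing requiring care is bookkeeping: making sure the algebra really is essential (handled by unitality) and that the point derivation supplied by \cite{R,ABEVLie} is genuinely non-zero and continuous, both of which are asserted in the cited sources.

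Put briefly, the proof I would write is: the space $X$ of \cite{R} makes $\mathcal{B}(X)$ a unital, hence essential, Banach algebra possessing a non-zero continuous point derivation by \cite[Proposition~2.3, Corollary~2.4]{ABEVLie}; the preceding lemma then forbids $\mathcal{B}(X)$ from being zero Jordan product determined.
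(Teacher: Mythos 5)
Your proof is correct and follows exactly the paper's route: invoke the preceding lemma after citing \cite{R} and \cite[Proposition~2.3, Corollary~2.4]{ABEVLie} for the existence of a non-zero continuous point derivation on $\mathcal{B}(X)$. Your explicit verification that $\mathcal{B}(X)$ is essential (via unitality) is a detail the paper leaves implicit, but it is the same argument.
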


We remark that the most standard Banach spaces $X$ are isomorphic 
to $X\oplus X$, and so $\mathcal{B}(X)$ is isomorphic to the matrix 
algebra $M_2(\mathcal{B}(X))$. It is easy to see that  every $n\times n$ 
matrix algebra (with $n\ge 2$)  over any unital algebra is generated 
by its idempotents (see~\cite[Proposition~4.2]{ABESV}). Such an algebra 
is therefore, by Theorem~\ref{idem} below, zero Jordan product determined even 
in the algebraic sense.

\section{Applications}

We start with a characterization of elements from the center $\mathcal{Z}(A)$ of $A$.

\begin{proposition}%\label{1047}
Let $A$ be a semiprime zero Jordan product determined Banach algebra.  Then the following 
properties are equivalent for an element $a\in A$:
\begin{enumerate}
\item
$a\in\mathcal{Z}(A)$,
\item
$(a\circ x)\circ y=0$ whenever $x,y\in A$ are such that $x\circ y=0$.
\end{enumerate}
\end{proposition}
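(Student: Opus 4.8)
The plan is to treat the two implications separately; only $(2)\Rightarrow(1)$ requires the defining property, while $(1)\Rightarrow(2)$ is a bare computation.

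For $(1)\Rightarrow(2)$ I would simply observe that if $a\in\mathcal{Z}(A)$ then $a\circ x=2ax=2xa$, and a short expansion gives $(a\circ x)\circ y=2a(x\circ y)$ for all $x,y\in A$; hence $x\circ y=0$ forces $(a\circ x)\circ y=0$. No amenability or semiprimeness is needed here.

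For the substantive direction $(2)\Rightarrow(1)$, the idea is to feed the hypothesis into the zero Jordan product determined property with the target space $X=A$ itself. Define the continuous bilinear map $\varphi\colon A\times A\to A$ by $\varphi(x,y)=(a\circ x)\circ y$. Condition (2) is precisely the statement that $\varphi$ satisfies \eqref{J}, so there is a continuous linear map $\sigma\colon A\circ A\to A$ with $\varphi(x,y)=\sigma(x\circ y)$ for all $x,y$. Since $x\circ y=y\circ x$, this representation is symmetric in $x$ and $y$, whence $\varphi(x,y)=\varphi(y,x)$. Expanding $(a\circ x)\circ y-(a\circ y)\circ x$ and cancelling, one finds that it equals $[a,[x,y]]$, so the symmetry yields
\[
[a,[x,y]]=0\qquad(x,y\in A).
\]
Thus $a$ commutes with every commutator.

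It remains to upgrade ``commutes with commutators'' to centrality, and this algebraic bootstrap is where the real work lies. Writing $d=\operatorname{ad}_a$ for the inner derivation $d(w)=[a,w]$, the previous identity says exactly that $d$ annihilates $[A,A]$. Applying the derivation $d$ to the Leibniz expansion $[x,yz]=[x,y]z+y[x,z]$ and using $d([x,y])=d([x,z])=0$ gives $[x,y]d(z)+d(y)[x,z]=0$; specializing $x=a$ collapses this to $2d(y)d(z)=0$, so $d(y)d(z)=0$ for all $y,z$. A further Leibniz substitution $z\mapsto zy$ in $d(y)d(z)=0$ then produces $d(y)\,z\,d(y)=0$, i.e. $[a,y]\,A\,[a,y]=\{0\}$; semiprimeness forces $[a,y]=0$ for every $y$, that is, $a\in\mathcal{Z}(A)$. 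I expect the main obstacle to be precisely this last chain: recognizing that the symmetry hidden in $\sigma(x\circ y)$ encodes $a$ commuting with commutators, and then finding the right Leibniz substitutions to convert the square-zero-type information $d(y)d(z)=0$ into the semiprimeness-ready form $[a,y]\,A\,[a,y]=\{0\}$.
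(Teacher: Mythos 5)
Your proof is correct and follows essentially the same route as the paper's: both feed $\varphi(x,y)=(a\circ x)\circ y$ into the zero Jordan product determined property to extract symmetry, deduce $[a,[x,y]]=0$, and then use a Leibniz-rule computation to reach $[a,y]\,A\,[a,y]=\{0\}$ so that semiprimeness kills $[a,y]$. The only cosmetic difference is that the paper first specializes $x=a$ to get $D^2=0$ for the inner derivation $D=\operatorname{ad}_a$ and then derives $D(x)D(y)=0$ and $D(x)yD(x)=0$ from that, whereas you derive the same two identities by differentiating the commutator identity $[x,yz]=[x,y]z+y[x,z]$ directly; the computations are interchangeable.
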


\begin{proof}
It suffices to show that (2) implies (1).
Let $\varphi\colon A\times A\to A$ be the continuous bilinear map defined by
$\varphi(x,y)=(a\circ x)\circ y$ $(x,y\in A)$.
Then there exists a continuous linear map $\sigma\colon A\circ A\to A$
such that $\varphi(x,y)=\sigma(x\circ y)$ $(x,y\in A)$, which, in 
particular, implies that $\varphi$ is symmetric. We thus get
\begin{equation}\label{e1457}
\bigl[a,[x,y]\bigr]=
(a\circ x)\circ y-(a\circ y)\circ x=
\varphi(x,y)-\varphi(y,x)=
0
\end{equation}
for all $x, y\in A$.
Let $D$ be the inner derivation of $A$ implemented by $a$.
From \eqref{e1457} we deduce that $D^2=0$. It is well-known that, in every semiprime algebra, this implies $D=0$. Let us give the proof for the sake of completeness. Using the derivation law, we  obtain
\[
2D(x)D(y)=D^2(xy) - D^2(x)y - xD^2(y) =0,
\]
and hence
\[
D(x)yD(x)= D(x)D(yx) - D(x)D(y)x=0
\]
for all $x,y\in A$.
Since $A$ is semiprime, it may be concluded that
$D=0$, meaning that  $[a,x]=0$ for each $x\in A$.
\end{proof}

If  elements $a$ and $b$ of a Banach algebra $A$  satisfy $a\circ b=0$, then $ab$ is a commutator, namely $ab= \frac{1}{2}[a,b]$.
The next proposition shows that in a noncommutative zero Jordan product determined Banach algebra, every commutator lies
in the closed linear span of such elements.

\begin{theorem}\label{pr2}
Let $A$ be a zero Jordan product determined Banach algebra.
Then every commutator in $A$ lies in $\overline{\operatorname{span}}\,\{ab :  a,b\in A, a\circ b =0\}$.
\end{theorem}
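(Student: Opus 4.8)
The plan is to argue by contradiction using Hahn--Banach duality, exploiting the fact that the standard form \eqref{S} is intrinsically symmetric in its two arguments and therefore forces an associated functional to be a trace, which annihilates every commutator. Write $S=\{ab : a,b\in A,\ a\circ b=0\}$ and $M=\overline{\operatorname{span}}\,S$. Suppose, for contradiction, that some commutator $[x_0,y_0]$ does not lie in $M$. Then by the Hahn--Banach theorem there is a functional $f\in A^*$ with $f|_M=0$ and $f([x_0,y_0])\neq 0$; in particular $f(ab)=0$ whenever $a\circ b=0$, since such an element $ab$ belongs to $S\subset M$.

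The key step is to feed the right scalar bilinear functional into the hypothesis. I would define $\varphi\colon A\times A\to\mathbb{C}$ by $\varphi(a,b)=f(ab)$. This is continuous and bilinear, and by the choice of $f$ it satisfies the implication \eqref{J}: if $a\circ b=0$ then $\varphi(a,b)=f(ab)=0$. Invoking Proposition~\ref{1046} (equivalently, the definition applied with $X=\mathbb{C}$), there exists a functional $g\in A^*$ realising the standard form \eqref{S}, so that $f(ab)=\varphi(a,b)=g(a\circ b)=g(ab)+g(ba)$ for all $a,b\in A$.

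The conclusion then drops out of a symmetry observation. Interchanging the roles of $a$ and $b$ in the identity $f(ab)=g(ab)+g(ba)$ yields $f(ba)=g(ba)+g(ab)$; the two right-hand sides coincide, so $f(ab)=f(ba)$ for all $a,b\in A$. Thus $f$ vanishes on every commutator, and in particular $f([x_0,y_0])=0$, contradicting the choice of $f$. Hence no commutator lies outside $M$, which is exactly the assertion of the theorem.

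I do not expect a genuine obstacle here: the argument reduces to a short duality computation. The only point that must be identified clearly is that the representation \eqref{S} is symmetric in $a$ and $b$, so the mere \emph{existence} of $g$ already forces $f$ to be a trace; the reduction to scalar-valued maps furnished by Proposition~\ref{1046} conveniently removes any need to deal with a general Banach-space target.
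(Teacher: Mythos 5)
Your proof is correct and rests on the same key observation as the paper's: the standard form \eqref{S} is symmetric in its two arguments, so any bilinear map built from the product $ab$ that vanishes when $a\circ b=0$ must itself be symmetric, which forces every commutator into $\overline{\operatorname{span}}\,\{ab : a,b\in A,\ a\circ b=0\}$. The only difference is packaging: the paper defines $\varphi(x,y)=xy+J$ directly into the quotient Banach space $A/J$ (where $J$ is the closed span) and reads off $[x,y]\in J$ from symmetry in one step, whereas you dualize via Hahn--Banach and argue by contradiction with scalar functionals and Proposition~\ref{1046}; your argument is a faithful dual of the paper's and equally valid, just slightly longer.
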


\begin{proof}
Let $J=\overline{\operatorname{span}}\,\{ab : a,b\in A, a\circ b =0\}$.
Define $\varphi\colon A\times A\to A/J$ by
\[
\varphi(x,y)= xy + J \quad (x,y\in A).
\]
Then $\varphi$ is a continuous bilinear map and it is clear that $x\circ y=0$ implies $\varphi(x,y)=0$.
Therefore, $\varphi$ must be, in particular, symmetric, which readily implies that $[x,y]\in J$ for all $x,y\in A$.
\end{proof}

\begin{corollary}
Let $A$ be a  weakly amenable Banach algebra with property $\mathbb{B}$ and having a bounded approximate identity.
Then $A$ is a zero Jordan product determined Banach algebra if and only if
 every commutator in $A$ lies in $\overline{\operatorname{span}}\,\{ab : a,b\in A, a\circ b =0\}$.
\end{corollary}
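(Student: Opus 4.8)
The plan is to prove the two implications separately, assembling the machinery already developed rather than arguing from scratch. The forward implication is immediate: if $A$ is zero Jordan product determined, then Theorem~\ref{pr2} directly gives that every commutator in $A$ lies in $\overline{\operatorname{span}}\,\{ab : a,b\in A,\ a\circ b=0\}$, and this uses none of the weak amenability hypotheses. So the content is entirely in the converse.

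For the converse I would first invoke Proposition~\ref{1046} to reduce to the scalar-valued case: it suffices to show that every continuous bilinear functional $\varphi\colon A\times A\to\mathbb{C}$ satisfying~\eqref{J} can be written in the form~\eqref{S}. Since $A$ is weakly amenable, has property $\mathbb{B}$, and has a bounded approximate identity, Lemma~\ref{1401} supplies $\sigma,\tau\in A^*$ with $\varphi(a,b)=\sigma(a\circ b)+\tau([a,b])$ for all $a,b\in A$. The task then reduces to eliminating the second summand, i.e. to proving that $\tau([a,b])=0$ for all $a,b\in A$, for then $\varphi(a,b)=\sigma(a\circ b)$ is exactly the form~\eqref{S}.

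The crux is the observation that $\tau$ annihilates $J:=\overline{\operatorname{span}}\,\{ab : a,b\in A,\ a\circ b=0\}$. Whenever $a\circ b=0$ we have $\varphi(a,b)=0$ by~\eqref{J}, while the decomposition forces $\varphi(a,b)=\tau([a,b])$; recalling that $a\circ b=0$ gives $ab=\tfrac12[a,b]$, this yields $\tau(ab)=\tfrac12\tau([a,b])=0$ on every generator $ab$ of $J$, whence $\tau|_J=0$ by linearity and continuity of $\tau$. Now the standing hypothesis that every commutator of $A$ lies in $J$ gives $\tau([a,b])=0$ for all $a,b\in A$, so $\varphi(a,b)=\sigma(a\circ b)$, and Proposition~\ref{1046} concludes that $A$ is zero Jordan product determined.

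Since the argument is a direct combination of Proposition~\ref{1046}, Lemma~\ref{1401}, and Theorem~\ref{pr2}, I do not expect a genuine obstacle. The one step requiring care—and the only place where the hypothesis is actually consumed—is verifying that $\tau$ vanishes on all of $J$: this rests on the elementary identity $ab=\tfrac12[a,b]$ valid precisely when $a\circ b=0$, together with the continuity of $\tau$ needed to pass from the generators of $J$ to its closed linear span.
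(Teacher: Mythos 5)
Your proposal is correct and follows essentially the same route as the paper: the forward direction is Theorem~\ref{pr2}, and the converse combines Lemma~\ref{1401} with the observation that $\tau$ vanishes on $\overline{\operatorname{span}}\,\{ab : a,b\in A,\ a\circ b=0\}$ (via $ab=\tfrac12[a,b]$ when $a\circ b=0$), hence on all commutators. The only cosmetic difference is that you invoke Proposition~\ref{1046} explicitly to reduce to scalar-valued functionals, a reduction the paper leaves implicit.
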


\begin{proof}
By Theorem \ref{pr2}, we only have to prove the ``if'' part.
Assume, therefore, that $\overline{\operatorname{span}}\,\{ab : a,b\in A, a\circ b =0\}$ contains all commutators and
that a continuous bilinear functional $\varphi \colon A\times A\to\mathbb{C}$ satisfies
\begin{equation*}
a,b\in A, \ a\circ b=0 \ \Longrightarrow \ \varphi(a,b)=0.
\end{equation*}
By Lemma \ref{1401},  there exist $\sigma,\tau\in A^*$   such that
\[
\varphi(a,b)= \sigma(a\circ b)+\tau([a,b])\quad (a,b\in A).
\]
 Obviously, $\tau(ab)=\frac{1}{2}\tau([a,b])=0$ whenever $a\circ b=0$. However, according to our assumption this implies that
$\tau$ vanishes on all commutators. Hence, $\varphi(a,b)= \sigma(a\circ b)$, which proves that $A$ is zero Jordan product determined.
\end{proof}

The following is an immediate consequence of Theorem \ref{pr2}.

\begin{corollary}\label{co2}
Let $A$ be a zero Jordan product determined Banach algebra.
If $A$ is not commutative, then there exist $a,b\in A$ such that $ab\ne 0$ and $a\circ b=0$.
\end{corollary}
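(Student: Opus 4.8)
The plan is to argue by contraposition, exploiting Theorem~\ref{pr2} directly. Suppose that the conclusion fails, that is, suppose there is no pair $a,b\in A$ with $ab\ne 0$ and $a\circ b=0$. This negation says precisely that \emph{whenever} $a,b\in A$ satisfy $a\circ b=0$, one must have $ab=0$. In other words, every product $ab$ arising from a Jordan-zero pair already vanishes.

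Under this hypothesis I would examine the spanning set appearing in Theorem~\ref{pr2}, namely $\{ab : a,b\in A,\ a\circ b=0\}$. By the assumption just made, every element of this set equals $0$, so the set is exactly $\{0\}$. Consequently its closed linear span is the trivial subspace:
\[
\overline{\operatorname{span}}\,\{ab : a,b\in A,\ a\circ b=0\}=\{0\}.
\]
Now I would invoke Theorem~\ref{pr2}, which asserts that in a zero Jordan product determined Banach algebra every commutator lies in this closed span. Since the span is $\{0\}$, it follows that $[a,b]=0$ for all $a,b\in A$, i.e.\ $A$ is commutative. This is the contrapositive of the desired statement, so if $A$ is \emph{not} commutative there must exist $a,b\in A$ with $a\circ b=0$ and $ab\ne 0$, completing the argument.

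I do not expect any genuine obstacle here: the corollary is a formal logical consequence of Theorem~\ref{pr2}, and the only point requiring the slightest care is the bookkeeping observation that forbidding all pairs with $ab\ne 0$ and $a\circ b=0$ collapses the relevant spanning set to $\{0\}$ (rather than leaving it empty, since the trivial pair always contributes $0$). Once that is noted, the closed span is trivial and the conclusion is immediate.
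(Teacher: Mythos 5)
Your argument is correct and is precisely the reasoning the paper has in mind: the paper offers no separate proof, declaring the corollary ``an immediate consequence of Theorem~\ref{pr2}'', and your contrapositive --- that the absence of such a pair collapses the spanning set to $\{0\}$, forcing all commutators to vanish --- is exactly how that immediacy is realized. No gaps; the remark about the set being $\{0\}$ rather than empty is the right level of care.
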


Is the closure unnecessary in  Theorem \ref{pr2}?
A rather obvious way to attack this question is to consider the purely algebraic
version of the zero Jordan product determination.
We say that an algebra $A$ over a field $\mathbb{F}$ is a \emph{zero product determined algebra}
if every bilinear map $\varphi$ from $A\times A$ into each linear space $X$ that satisfies \eqref{P}
can be written in the standard form \eqref{PS} for some linear map $\sigma\colon A^2\to X$.
We  say that an algebra $A$ over a field $\mathbb{F}$ is a \emph{zero Jordan product determined algebra}
if every bilinear map $\varphi$ from $A\times A$ into each linear space $X$ that satisfies \eqref{J}
can be written in the standard form \eqref{S} for some linear map $\sigma\colon A\circ A\to X$. These notions were introduced in 
\cite{BGS}. Concerning the second notion, the most general result we know 
 of is the following.

\begin{theorem}\label{idem} \cite{ALH}
A unital algebra over a field of characteristic not $2$ is zero Jordan product determined if it is generated by idempotents.
\end{theorem}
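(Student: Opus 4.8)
The plan is to reduce the statement to the vanishing of a single bilinear ``defect'' map and then to exploit the idempotent generators through their Peirce decompositions. Fix a bilinear map $\varphi\colon A\times A\to X$, with $X$ an arbitrary $\mathbb{F}$-vector space, satisfying $a\circ b=0\Rightarrow\varphi(a,b)=0$. Since $\operatorname{char}\mathbb{F}\ne 2$ and $A$ is unital, every element is a linear combination of squares (from $2a=(1+a)^2-1-a^2$ together with $c\circ c=2c^2$), so $A\circ A=A$ and we are looking for a \emph{linear} $\sigma\colon A\to X$ with $\varphi(a,b)=\sigma(a\circ b)$. Setting $a=1$ forces $\varphi(1,b)=\sigma(1\circ b)=2\sigma(b)$, so there is only one candidate, namely $\sigma=\tfrac12\varphi(1,\cdot)$. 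Accordingly I would put
\[
\theta(a,b)=\varphi(a,b)-\tfrac12\varphi(1,a\circ b)\qquad(a,b\in A)
\]
and reduce the theorem to proving $\theta\equiv 0$. At this stage one records the elementary facts that $\theta$ is bilinear, that it again vanishes on Jordan-orthogonal pairs, that $\theta(1,\cdot)=0$, and that $\theta(e,e)=0$ for every idempotent $e$ (the last one because $(1-e)\circ e=0$ gives $\varphi(1,e)=\varphi(e,e)$).

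Next I would bring in the idempotent generators. For a fixed idempotent $e$, write $e'=1-e$ and use the Peirce decomposition $A=A_{11}\oplus A_{10}\oplus A_{01}\oplus A_{00}$, where $A_{ij}=e_iAe_j$ with $e_1=e$, $e_0=e'$. The Jordan product is severely constrained on these pieces: one has $A_{11}\circ A_{00}=0$, $A_{10}\circ A_{10}=0$ and $A_{01}\circ A_{01}=0$, so $\varphi$ (hence $\theta$) vanishes on the corresponding pairs of components, while the ``diagonal'' interactions are governed by $e\circ y=y$ for $y\in A_{10}\cup A_{01}$ and $e\circ x=2x$ for $x\in A_{11}$. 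Feeding these identities into $\theta$, together with $\theta(1,\cdot)=0$ and $\theta(e,e)=0$, I would show that $\theta$ vanishes on all pairs of Peirce components relative to $e$, and in particular that the antisymmetric part $\psi(a,b)=\theta(a,b)-\theta(b,a)=\varphi(a,b)-\varphi(b,a)$ is killed. Establishing this symmetry is the heart of the argument; it is the algebraic incarnation of the perfectness identity $[A,A]=\bigl[[A,A],[A,A]\bigr]$ that drives Corollaries~\ref{1550} and~\ref{1552}, and it is exactly here that one must combine \emph{several} idempotents rather than a single one, since one Peirce decomposition controls $\theta$ well off the diagonal but only weakly on the corners $A_{11}$ and $A_{00}$.

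Once $\theta$ is known to be symmetric, the remaining work is to propagate its vanishing from the generators to all of $A$. Because $\operatorname{char}\mathbb{F}\ne 2$, a symmetric $\theta$ is recovered from its diagonal through $2\theta(a,b)=\theta(a+b,a+b)-\theta(a,a)-\theta(b,b)$, so it suffices to prove $\theta(a,a)=0$ for every $a$. Writing each $a$ as a linear combination of products $e_1e_2\cdots e_m$ of idempotents, I would run an induction on the word length $m$: the case $m=1$ is $\theta(e,e)=0$, and the Peirce relations of the previous step let one strip off one idempotent at a time, reducing the length while staying inside the class of elements on which $\theta$ is already controlled. Collecting everything, $\theta\equiv 0$, whence $\varphi(a,b)=\tfrac12\varphi(1,a\circ b)=\sigma(a\circ b)$ for the linear map $\sigma=\tfrac12\varphi(1,\cdot)$, and $A$ is zero Jordan product determined.

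The main obstacle I anticipate is the symmetry step of the second paragraph, i.e.\ showing that the Lie/antisymmetric part $\psi$ vanishes. The vanishing-on-Jordan-orthogonal-pairs hypothesis yields a great deal of information about off-diagonal Peirce components but comparatively little about the diagonal corners, so forcing $\psi=0$ requires playing off the Peirce decompositions attached to different idempotents against one another. This is precisely where the hypothesis that $A$ is \emph{generated} by idempotents (and not merely contains one) becomes indispensable, and where the characteristic $\ne 2$ assumption is used to polarize.
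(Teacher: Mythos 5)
A preliminary remark: the paper does not prove Theorem~\ref{idem} at all --- it quotes it from \cite{ALH} --- so there is no internal proof to compare with, and your proposal has to stand as a self-contained argument. Its opening reductions are correct and are indeed the standard opening moves: $A\circ A=A$ (more simply, $a=\tfrac12(1\circ a)$); the only possible $\sigma$ is $\tfrac12\varphi(1,\cdot)$; the defect $\theta$ is bilinear, vanishes on Jordan-orthogonal pairs, and satisfies $\theta(1,\cdot)=0$ and $\theta(e,e)=0$; and the Peirce relations $A_{11}\circ A_{00}=A_{10}\circ A_{10}=A_{01}\circ A_{01}=0$ all hold. But from that point on everything is announced (``I would show\dots'', ``I would run an induction\dots'') rather than proved, and the two announced steps are not technical details to be filled in later: they are the entire content of the theorem.

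Concretely, there are two gaps. (i) The claim that $\theta$ vanishes on all pairs of Peirce components relative to a \emph{fixed} idempotent $e$ cannot be derived from the Peirce data of that one idempotent. Take $A=\mathbb{C}\times G$, where $G$ is the Grassmann algebra of Example~\ref{ex2}, define $\varphi\bigl((s,g),(t,h)\bigr)=\varphi_G(g,h)$ with $\varphi_G$ the functional of that example, and let $e=(1,0)$. Then $\varphi$ satisfies~\eqref{J}, both $(0,x)$ and $(0,1_G)$ lie in $A_{00}$, yet $\theta\bigl((0,x),(0,1_G)\bigr)=\varphi_G(x,1_G)-\varphi_G(1_G,x)=-1\neq0$. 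This $A$ is not generated by idempotents, so the theorem is untouched; what the example shows is that your intermediate claim does not follow from the identities you list --- any proof of it must invoke \emph{other} idempotents and the generation hypothesis in an essential way, so as an intermediate step it is essentially the theorem itself, and the plan is circular exactly at its crucial point. (ii) The induction on word length has no mechanism behind it, and the natural attempt runs the wrong way. One can show $\theta(f,\cdot)=0$ for every idempotent $f$ (use the pairs $(1-f)\circ fxf=0$ and $(1-2f)\circ fx(1-f)=0$), hence $\theta$ kills the linear span of idempotents in its first variable; since $e+ec(1-e)$ is an idempotent, this span contains every $ec(1-e)$ and $(1-e)ce$, so for a word $w$ the commutator $[e,w]$ and the off-diagonal parts of $e\circ w$ are harmless, and $\theta(ew,\cdot)=\theta(ewe,\cdot)$. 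Thus the inductive step is precisely the assertion $\theta(ewe,\cdot)=0$, whereas the hypothesis $\theta(w,\cdot)=0$ only yields $\theta\bigl(ewe+(1-e)w(1-e),\cdot\bigr)=0$: one must separate the two diagonal Peirce corners, and nothing in your list does that. Moreover, $ewe$ is a \emph{longer} word than $w$, so ``stripping off one idempotent at a time'' increases rather than decreases the length. Producing the identities that resolve this corner-separation problem is what the cited proof of An--Li--He consists of; without them, the proposal is a correct setup followed by a restatement of the problem.
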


It is clear that a commutative algebra over a field of characteristic not $2$ is a zero product determined algebra
if and only if it is a zero Jordan product determined algebra.
Therefore, there do exist commutative algebras that are not zero Jordan product determined.
What about  noncommutative algebras? We first examine two important examples of finite-dimensional algebras.

\begin{example}\label{ex1}
The real algebra of quaternions $\mathbb H$ is not a zero Jordan product determined algebra.
To show this,  consider $\mathbb H$ as the real vector space $\mathbb R\times \mathbb R^3$ given by the product
\[
(\alpha,x)(\beta,y)= (\alpha\beta - x\cdot y,\alpha y + \beta x  + x\times y)
\]
(here, $x\cdot y$ and  $x\times y$ stand for the  dot product and   the vector product of $x$ and $y$, respectively).
Assuming that
\[
(\alpha,x)(\beta,y) + (\beta,y)(\alpha,x) =0,
\]
it follows that $\alpha\beta = x\cdot y$ and $\alpha y =- \beta x$, and hence
\[
\alpha^2\beta^2= \beta x\cdot\alpha y= - \beta x\cdot\beta x,
\]
which is possible only if both sides are zero. In particular, $\alpha\beta=0$. This means that the bilinear map
$\varphi\colon\mathbb H\times \mathbb H\to \mathbb R$ given by
\[
\varphi\big((\alpha,x),   (\beta,y)\big) =\alpha\beta
\]
satisfies the condition \eqref{J}.
Since
\[
\varphi\big((1,0),(1,0)\big) + \varphi\big((0,i), (0,i)\big) = 1\cdot 1 + 0\cdot 0 =1
\]
and
\[
(1,0)(1,0)  + (0,i)(0,i) =0,
\]
we see that there does not exist a linear functional $\sigma$ on $\mathbb H$ such that \eqref{S} holds.
\end{example}

\begin{example}\label{ex2}
Let $A$ be the Grassmann algebra (over, say, $\mathbb C$) in two generators $x$ and $y$.
That is, $A$ is the $4$-dimensional linear space with basis $1,x,y,xy$ whose multiplication
is determined by $x^2 =y^2 =x\circ y=0$. Write
\[
a= \lambda_0 + \lambda_1 x + \lambda_2 y + \lambda_3 xy\quad\mbox{and}\quad b=\mu_0 + \mu_1 x + \mu_2 y + \mu_3 xy.
\]
It is easy to see that $a\circ b=0$ if and only if either $a=0$, $b=0$, or
$\lambda_0=\mu_0=0$.
Hence, the bilinear functional given by
\[
\varphi(\lambda_0 + \lambda_1 x + \lambda_2 y + \lambda_3 xy, \mu_0 + \mu_1 x + \mu_2 y + \mu_3 xy) = \lambda_0\mu_1
\]
satisfies the condition \eqref{J}. However, $\varphi$ is not symmetric, specifically
$\varphi(1,x)$ is not equal to $\varphi(x,1)$, and therefore it cannot be written in the form \eqref{S}.
Therefore, $A$ is not a zero Jordan product determined algebra.
\end{example}

An obvious modification of the proof of Theorem \ref{pr2} gives the following.

\begin{theorem}\label{pr}
Let $A$ be a zero Jordan product determined  algebra. Then every commutator in $A$ lies in
$\operatorname{span}\,\{ab : a,b\in A, a\circ b =0\}$.
\end{theorem}

Of course, Corollary~\ref{co2} also holds for zero Jordan product determined  algebras.

\begin{corollary}%\label{co}
Let $A$ be a zero Jordan product determined algebra.
If $A$ is not commutative, then there exist $a,b\in A$ such that $ab\ne 0$ and $a\circ b=0$.
\end{corollary}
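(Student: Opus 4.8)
The plan is to mimic the argument of Theorem~\ref{pr2}, but now working in the purely algebraic setting where no closure is needed. Given a zero Jordan product determined algebra $A$ that is not commutative, suppose toward a contradiction that there are no elements $a,b\in A$ with $ab\ne 0$ and $a\circ b=0$. Since $A$ is noncommutative, Theorem~\ref{pr} guarantees the existence of some nonzero commutator, and it tells us that every commutator lies in $\operatorname{span}\,\{ab : a,b\in A,\ a\circ b=0\}$. The idea is to show that our supposition forces this spanning set to collapse, making all commutators zero, which contradicts noncommutativity.

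First I would unpack the assumed condition: saying that there are no $a,b$ with $ab\ne 0$ and $a\circ b=0$ means precisely that $a\circ b=0$ always implies $ab=0$. Consequently the set $\{ab : a,b\in A,\ a\circ b=0\}$ equals $\{0\}$, and therefore its linear span is $\{0\}$. By Theorem~\ref{pr}, every commutator in $A$ then lies in $\operatorname{span}\,\{0\}=\{0\}$, so $[a,b]=0$ for all $a,b\in A$. This says exactly that $A$ is commutative, contradicting the hypothesis that $A$ is not commutative. Hence elements $a,b\in A$ with $ab\ne 0$ and $a\circ b=0$ must exist.

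I do not expect any serious obstacle here, since the statement is an immediate consequence of Theorem~\ref{pr} by exactly the same reasoning that produces Corollary~\ref{co2} from Theorem~\ref{pr2}; indeed the remark preceding the statement already signals that ``Corollary~\ref{co2} also holds.'' The only point that deserves a moment's care is the logical equivalence between ``no $a,b$ satisfy $ab\ne 0$ together with $a\circ b=0$'' and ``$a\circ b=0 \Rightarrow ab=0$,'' and the observation that a single zero-valued product set has trivial span. Both are routine, so the proof is essentially a one-line deduction from the preceding theorem.
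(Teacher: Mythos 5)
Your proof is correct and follows exactly the route the paper intends: the paper treats this as an immediate consequence of Theorem~\ref{pr} (just as Corollary~\ref{co2} follows from Theorem~\ref{pr2}), namely that if no pair $a,b$ with $ab\ne 0$ and $a\circ b=0$ existed, the spanning set would be $\{0\}$ and all commutators would vanish, contradicting noncommutativity. Nothing is missing; your contrapositive argument is precisely the one-line deduction the paper leaves implicit.
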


The following example shows that there are algebras not having such a pair of elements.

\begin{example}
Let $A$ be the (first) Weyl algebra over $\mathbb C$. Recall that $A$ is generated by two elements $x$ and $y$
that satisfy the relation $xy-yx=1$.
Every element in $A$ can be uniquely written in the form
$\sum_{i=0}^m f_i(x)y^i$ for some $n\ge 0$ and some polynomials $f_i$, and  for any polynomials $f_i,g_j$ there exists polynomials $h_k$ such that
\[
\left(\sum_{i=0}^m f_i(x)y^i\right)\left(\sum_{j=0}^n g_j(x)y^j\right) = f_m(x)g_n(x)y^{m+n} + \sum_{k=0}^{m+n-1} h_k(x)y^k
\]
(see \cite[Example 2.28]{B}). This readily implies that $a\circ b\ne 0$ for all nonzero elements $a,b\in A$.
\end{example}

Thus, neither the algebra of quaternions nor the Weyl algebra is a zero Jordan product determined algebra.
We remark that these two algebras have no zero-divisors,
so they are not zero product determined algebras.
The Grassmann algebra also is not a zero product determined algebra since it is finite-dimensional and is
not generated by idempotents \cite{B1}.
As a matter of fact, we do not know of any unital algebra that is either zero Jordan product determined or zero product determined,
but is not generated by idempotents.

The converse of Theorem \ref{pr} does not hold. That is, if every commutator lies in
$\operatorname{span}\,\{ab : a,b\in A, a\circ b =0\}$, then
$A$ may not be zero Jordan product determined. Both algebras from Examples \ref{ex1} and \ref{ex2} serve as counterexamples.

Combining Theorem \ref{pr} with Theorem \ref{idem}, we thus see that in a unital
algebra that is generated by idempotents, every commutator can be written as a sum of elements of the form $ab$ where $a$ and $b$ anticommute.
In particular, this holds for every matrix algebra $M_n(B)$, where $n\ge 2$ and $B$ is any unital algebra. However, more can be said about commutators
in an algebra of this kind. The following lemma
is evident from the proof of \cite[Theorem 4.4]{ABESV}.

\begin{lemma}\label{lemnb}
Let $B$ be a unital algebra and  let $n\ge 2$. Then every commutator in $A=M_n(B)$ can be written as a sum of $22$ elements of the form
$ex(1-e)$ where $e,x\in A$ with $e$ being an idempotent.
\end{lemma}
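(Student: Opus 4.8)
The plan is to revisit the explicit construction in the proof of \cite[Theorem 4.4]{ABESV} and read off from it a list of $22$ summands, each of the stated form. I first record the elementary facts that make such a list possible. If $e\in A$ is an idempotent, then for every $x\in A$ one has $e\bigl(ex(1-e)\bigr)=ex(1-e)$ and $\bigl(ex(1-e)\bigr)e=0$, so that $ex(1-e)$ is a square-zero element; conversely, an element $y$ is of this form whenever there is an idempotent $e$ with $ey=y$ and $ye=0$. Two families of such elements will be used. First, for $i\ne j$ and $c\in B$ the off-diagonal element $cE_{ij}$ (with $E_{ij}$ the standard matrix units) equals $E_{ii}(cE_{ij})(1-E_{ii})$, hence is of the required form. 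Second, for any idempotent $e$ and any $x$ the identity
\[
[e,x]=ex(1-e)+(1-e)(-x)\bigl(1-(1-e)\bigr)
\]
exhibits the commutator $[e,x]$ as a sum of two elements of the required form, one built from $e$ and one from $1-e$.

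Next I would split a given commutator $[a,b]=ab-ba$ into its diagonal and off-diagonal parts relative to $E_{11},\dots,E_{nn}$. Writing $a=\sum a_{ij}E_{ij}$, $b=\sum b_{ij}E_{ij}$ with $a_{ij},b_{ij}\in B$, the off-diagonal part is a combination of elements $cE_{ij}$ with $i\ne j$, which are already of the desired form; the construction of \cite[Theorem 4.4]{ABESV} arranges that only boundedly many summands are spent on it. The essential work concerns the diagonal part $\sum_i d_iE_{ii}$, where $d_i=\sum_j\bigl(a_{ij}b_{ji}-b_{ij}a_{ji}\bigr)$. A direct computation gives $\sum_i d_i=\sum_{i,j}[a_{ij},b_{ji}]$, a sum of commutators in $B$; thus the diagonal part is a diagonal matrix whose ``trace'' lies in $[B,B]$. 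To realise it, I would use rank-one square-zero blocks such as
\[
\begin{pmatrix}-qp & q\\ -pqp & pq\end{pmatrix}=e\begin{pmatrix}-qp & q\\ -pqp & pq\end{pmatrix}(1-e),\qquad e=E_{11}+pE_{21},
\]
which place a prescribed product $pq$ onto the diagonal at the cost of controlled off-diagonal debris; that debris is then swept into the off-diagonal part already accounted for. Note that such an idempotent $e=E_{ii}+cE_{ji}$ is available over an arbitrary unital $B$, so no spectral projections (and no division) are needed.

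The hard part is to carry this out with a number of summands that is \emph{independent of $n$}. A naive, entry-by-entry treatment of the diagonal, or a recursion on the two-by-two block decomposition determined by $e=E_{11}$ and $1-e$ (whose lower corner is the commutator problem for $M_{n-1}(B)$), produces a bound that grows with $n$ and is therefore useless here. What is needed instead is a global identity that realises the whole diagonal at once, exploiting only the trace relation $\sum_i d_i\in[B,B]$ rather than the individual entries $d_i$; this is exactly the mechanism underlying \cite[Theorem 4.4]{ABESV}, where a trace-zero element of $M_n(B)$ is written as a bounded sum of square-zero elements uniformly in $n$. Finally, I would verify that each square-zero summand so produced is genuinely of the form $ex(1-e)$ for an explicit idempotent, and then tally the contributions---the off-diagonal block elements, the product-installing blocks, and the corrections coming from the commutator trace---which yields the bound $22$.
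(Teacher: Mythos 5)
Your building blocks are all correct: the identity $[e,x]=ex(1-e)+(1-e)(-x)\bigl(1-(1-e)\bigr)$, the off-diagonal elements $E_{ii}(cE_{ij})(1-E_{ii})$, and the square-zero block fixed by the idempotent $e=E_{11}+pE_{21}$ all check out, and they are indeed the kind of summands that occur in the cited construction. The problem is that the lemma is essentially a quantitative statement --- \emph{twenty-two} summands, each of the special form $ex(1-e)$, with a bound independent of $n$ --- and your proposal establishes neither the uniformity nor the number. You yourself isolate the decisive step (a global identity realising the whole diagonal part at once, using only that its trace lies in $[B,B]$) as ``the hard part'', and then you do not supply it: you point back to the proof of \cite[Theorem 4.4]{ABESV}. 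The same happens with the off-diagonal part, where the claim that only boundedly many summands are spent is again deferred to the citation (a naive treatment needs $n(n-1)$ elements $cE_{ij}$, and a dyadic block splitting needs $\lceil\log_2 n\rceil$ idempotents, both unbounded in $n$). Finally, ``tally the contributions \dots which yields the bound $22$'' is an assertion, not a computation; nothing written in the plan determines the number $22$.

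This is a genuine gap, and it is exactly the content of the lemma. The paper's own proof is the one-line remark that the lemma is evident from the proof of \cite[Theorem 4.4]{ABESV}: what must be checked is that each of the $22$ square-zero elements constructed there is of the form $ex(1-e)$ for an explicit idempotent $e$ (for instance, summands arising as the two halves of a commutator $[e_i,x_i]=e_ix_i(1-e_i)-(1-e_i)x_ie_i$ with $e_i$ idempotent are of this form). Note that one cannot instead invoke the \emph{statement} of that theorem as a black box, since it only asserts a decomposition into square-zero elements, not elements of the special form; the internals of the construction are indispensable. So a blind proof has to reproduce that construction in full and count its summands. Your plan reconstructs plausible ingredients but stops precisely where the lemma begins.
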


Since $ex(1-e) = ex(1-e)\cdot (1-2e)$ and  $ ex(1-e)$ and $1-2e$ anticommute, this yields the following.

\begin{proposition}%\label{prm}
Let $B$ be a unital algebra and  let $n\ge 2$. Then every commutator in $A=M_n(B)$ is a sum of
$22$ elements of the form $ab$ where $a,b\in A$ and $a\circ b =0$.
\end{proposition}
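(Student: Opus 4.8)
The plan is to feed Lemma~\ref{lemnb} into a short anticommutation identity, so that the proof reduces to a single algebraic verification. First I would fix an arbitrary commutator in $A = M_n(B)$ and invoke Lemma~\ref{lemnb} to write it as a sum of $22$ terms of the form $ex(1-e)$, where $e, x \in A$ and $e$ is an idempotent. Since $B$ is unital, $A = M_n(B)$ is unital as well, so the elements $1-e$ and $1-2e$ are legitimate members of $A$ and the expressions below are meaningful.

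The heart of the matter is to exhibit each summand $ex(1-e)$ as a product $ab$ with $a \circ b = 0$. Guided by the remark preceding the statement, I would set $a = ex(1-e)$ and $b = 1-2e$. Using only $e^2 = e$, one computes $(1-e)(1-2e) = 1-e$, so that $ab = ex(1-e)(1-2e) = ex(1-e)$; thus $a$ and $b$ reproduce the given summand. For the anticommutation I would use $(1-2e)e = -e$ to get $ba = (1-2e)\,ex(1-e) = -ex(1-e) = -ab$, whence $a \circ b = ab + ba = 0$. It is worth noting that both identities rest only on $e^2 = e$ and involve no division, so the argument is valid over any field and in particular needs no hypothesis on the characteristic.

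Assembling the two steps, each of the $22$ summands $ex(1-e)$ is of the requested form $ab$ with $a \circ b = 0$, so the commutator is a sum of $22$ such elements, which is exactly the assertion.

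I do not expect a genuine obstacle here: all the real work is packaged in Lemma~\ref{lemnb}, which supplies both the representation of commutators by terms $ex(1-e)$ and the explicit constant $22$. The only place that warrants a moment of care is checking that the single choice $b = 1-2e$ works uniformly across all summands, that is, that the factorization $ex(1-e) = ab$ and the relation $ba = -ab$ follow purely from the idempotency of $e$ and not from any special feature of the particular $x$ or $e$ produced by the lemma.
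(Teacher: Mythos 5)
Your proposal is correct and matches the paper's argument exactly: the paper likewise combines Lemma~\ref{lemnb} with the observation that $ex(1-e) = ex(1-e)\cdot(1-2e)$ and that $ex(1-e)$ anticommutes with $1-2e$. Your explicit verification of the identities $(1-e)(1-2e)=1-e$ and $(1-2e)e=-e$ simply spells out what the paper leaves as a one-line remark.
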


Using \cite[Proposition 4.5]{ABESV} along with a brief inspection
of the proofs of \cite[Theorem~4.4 and 4.6]{ABESV}, shows that
Lemma~\ref{lemnb} holds for every von Neumann algebra.
Therefore, the following theorem is true.

\begin{theorem}%\label{prvn}
Let $A$ be a von Neumann algebra. Then every commutator in $A$ is a sum of
$22$ elements of the form $ab$ where $a,b\in A$ and $a\circ b =0$.
\end{theorem}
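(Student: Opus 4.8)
The plan is to obtain the statement by combining the von Neumann algebra version of Lemma~\ref{lemnb} with the elementary anticommutation identity already used, in the matrix case, just after Lemma~\ref{lemnb}. As recorded in the paragraph immediately before the theorem, \cite[Proposition~4.5]{ABESV} together with an inspection of the proofs of \cite[Theorem~4.4 and 4.6]{ABESV} shows that Lemma~\ref{lemnb} remains valid when $A$ is a von Neumann algebra: every commutator in $A$ is a sum of $22$ elements of the form $ex(1-e)$, with $e,x\in A$ and $e$ an idempotent. I would take this as the starting point, so that it only remains to rewrite each summand $ex(1-e)$ as a product of two anticommuting elements.

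For this last step, set $a=ex(1-e)$ and $b=1-2e$. Since $e$ is idempotent we have $(1-e)(1-2e)=1-e$ and $(1-2e)e=-e$, from which $ab=ex(1-e)(1-2e)=ex(1-e)$ and $ba=(1-2e)ex(1-e)=-ex(1-e)$. Hence $ab=ex(1-e)$ while $a\circ b=ab+ba=0$; that is, each of the $22$ summands is already of the form $ab$ with $a,b\in A$ and $a\circ b=0$. Summing these representations over the $22$ terms yields the desired expression for an arbitrary commutator, and the count $22$ is inherited unchanged.

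The only genuine difficulty lies in the transition from matrix algebras to von Neumann algebras in Lemma~\ref{lemnb}, which I am assuming as input. The matrix-algebra proof rests on an explicit system of matrix units, a structure that a general von Neumann algebra does not possess; the main obstacle is to substitute for it the projection structure of $A$---the type decomposition together with the comparison and halving of projections---so as to produce the same off-diagonal decomposition of a commutator into a uniformly bounded number $(22)$ of pieces $ex(1-e)$. This is precisely what the cited results from \cite{ABESV} accomplish, and once that uniform bound is available the anticommutator trick above finishes the argument with no further estimates.
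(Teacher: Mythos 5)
Your proposal is correct and follows exactly the paper's own route: the von Neumann algebra version of Lemma~\ref{lemnb} (justified by the cited results from \cite{ABESV}) combined with the anticommutation identity $ex(1-e)=ex(1-e)\cdot(1-2e)$, where $ex(1-e)$ and $1-2e$ anticommute. Your explicit verification that $(1-e)(1-2e)=1-e$ and $(1-2e)e=-e$ is accurate, and the bound of $22$ is preserved exactly as in the paper.
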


\bibliographystyle{line}

\end{document}